\DeclareMathOperator{\PSL}{\mathrm{PSL}}
\DeclareMathOperator{\GL}{\mathrm{GL}}
\begin{document}

\newtheorem{theorem}{Theorem}[subsection]
\newtheorem{lemma}[theorem]{Lemma}
\newtheorem{corollary}[theorem]{Corollary}
\newtheorem{conjecture}[theorem]{Conjecture}
\newtheorem{proposition}[theorem]{Proposition}
\newtheorem{question}[theorem]{Question}
\newtheorem{problem}[theorem]{Problem}
\newtheorem*{PL_LO_thm}{PL is LO~\ref{thm:PL_LO}}
\newtheorem*{claim}{Claim}
\newtheorem*{criterion}{Criterion}
\theoremstyle{definition}
\newtheorem{definition}[theorem]{Definition}
\newtheorem{construction}[theorem]{Construction}
\newtheorem{notation}[theorem]{Notation}
\newtheorem{convention}[theorem]{Convention}
\newtheorem*{warning}{Warning}

\theoremstyle{remark}
\newtheorem{remark}[theorem]{Remark}
\newtheorem{example}[theorem]{Example}
\newtheorem*{case}{Case}

\def\C{\mathbb C}
\def\F{\mathfrak F}
\def\H{\mathbb H}
\def\R{\mathbb R}
\def\Z{\mathbb Z}
\def\RP{\mathbb{RP}}
\def\PL{\textnormal{PL}}
\def\GL{\textnormal{GL}}
\def\Homeo{\textnormal{Homeo}}
\def\a{{\alpha}}
\def\b{{\beta}}
\def\s{{\sigma}}
\def\G{{\Gamma}}
\def\Homeo{\textnormal{Homeo}}
\def\Diff{\textnormal{Diff}}
\def\fix{\textnormal{fix}}
\def\fro{\textnormal{fro}}
\def\Id{\textnormal{Id}}

\title{Low-dimensional topology and ordering groups}

\author{Dale Rolfsen}
\address{Department of Mathematics \\ University of British Columbia \\ Vancouver \\ Canada}
\email{rolfsen@math.ubc.ca}

\date{\today}

\begin{abstract}
This expository paper explores the interaction of group ordering with topological questions, especially in dimensions 2 and 3.  Among the topics considered are surfaces, braid groups, 3-manifolds and their structures such as foliations and mappings between them.  A final section explores currently ongoing research regarding spaces of homeomorphisms and their orderability properties.   This is not meant to be a comprehensive survey, but rather just a taste of the rich relationship between topology and the theory of ordered groups.
\end{abstract}

\maketitle

\section{Introduction}

Group theory and low-dimensional topology are closely related subjects.  Indeed the extremely active field of geometric group theory is essentially a marriage of the two.  Groups arise, for example in consideration of symmetries, self-mappings of a space, isometries and similar actions,
mapping class groups, braid groups, as well as in algebraic invariants of spaces such as the fundamental group.  Closed surfaces, for example, are determined by their fundamental groups.
As is well-known, every finitely presented group $G$ can be realized as the fundamental group of a finite 2-dimensional complex, constructed by taking a bouquet of oriented loops, one for each generator, and attaching disks to the loops according to the relators.  Indeed a group is free if and only if it is the fundamental group of a graph, i.e a 1-dimensional complex.  This is one way to see directly the important theorem that subgroups of free groups are free.

There is an intimate connection between the topology and geometry of three-dimensional manifolds and their fundamental groups, although the situation is somewhat more complicated than in two dimensions.  There are examples of closed 3-manifolds which are topologically distinct, but which have isomorphic fundamental groups -- e. g. certain lens spaces. 
Although it is well-known that all finitely presented groups arise as fundamental groups of closed 
4-dimensional manifolds, this is not the case in dimension three.  The algebraic characterization of groups which do arise as $\pi_1(M)$ for some 3-manifold $M$ is an open problem.   Yet the fundamental group does serve as a powerful 3-manifold invariant.  The most famous example is the Poincar\'e conjecture, now a theorem of Perelman, that $S^3$ is the only closed 3-manifold which has trivial fundamental group.
 
Certain structures on 3-manifolds are reflected in the algebraic structure of the fundamental group.  For example, Thurston \cite{Thur} described eight possible geometric structures for 3-manifolds and gave a procedure for determining the structure from the manifold's fundamental group. 

\section{Orderable groups}

If the elements of a group $G$ can be given a strict total order $<$ which is invariant under 
left-multiplication, so that $g < h$ implies $fg < fh$ for all $f, g, h \in G$, we call $<$ a {\em left-order} and say that $G$ is {\em left-orderable}.   If $<$ is also right-invariant, we will call it a {\em bi-order} and say that $G$ is {\em bi-orderable}.  In the classical literature, the usual term is ``orderable" --  we use the term bi-orderable for emphasis.

A left-order is determined by its positive cone $P = P_< := \{g \in G | 1 < g\}$ where $1$ denotes the identity of $G$.  Note that $P$ is closed under multiplication and that for every $g \in G$ exactly one of $g \in P, g^{-1} \in P$ or $g = 1$ holds.  On the other hand, any subset $P \subset G$ satisfying these conditions determines a left-order by declaring $g < h$ if and only if $g^{-1}h \in P$.  Note that a group is left-orderable if and only if it is right-orderable.  The criterion $gh^{-1} \in P$ defines a right-order with the same positive cone.

A group $G$ is {\em indicable} if there exists a surjective homomorphism of $G$ to the integers 
$\Z$.  If every nontrivial finitely generated subgroup of a group is indicable, the group is said to be {\em locally indicable}.  The following is well known.  We refer the reader to 
\cite{Fuc}, \cite{Glass}, \cite{MR77}, \cite{KM96} for further information on ordered groups.

\begin{proposition}
Bi-orderable $\implies$ locally indicable $\implies$ left-orderable $\implies$ torsion-free.  
\end{proposition}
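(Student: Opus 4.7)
The proposition chains three implications, which I would prove in order of increasing depth. For \emph{left-orderable $\Rightarrow$ torsion-free}: given $1 \ne g \in G$, we may assume $1 < g$ (else replace $g$ by $g^{-1}$); left-invariance applied to $1 < g$ gives $g < g^2$, and iteration yields $1 < g < g^2 < \cdots$, so $g^n \ne 1$ for every $n \ge 1$.

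For \emph{locally indicable $\Rightarrow$ left-orderable} I would invoke the Burns--Hale strategy. First record the Zorn's-lemma reformulation: $G$ is left-orderable iff for every finite set $\{g_1,\ldots,g_k\} \subset G \setminus \{1\}$ there exist signs $\epsilon_i \in \{\pm 1\}$ such that the sub-semigroup of $G$ generated by $g_1^{\epsilon_1},\ldots,g_k^{\epsilon_k}$ avoids $1$. To verify this criterion, set $H = \langle g_1,\ldots,g_k\rangle$ and pick a surjection $\phi\colon H \twoheadrightarrow \Z$ from indicability; assign $\epsilon_i = \mathrm{sign}\,\phi(g_i)$ whenever $\phi(g_i) \ne 0$, and recurse on the finitely generated (and still locally indicable) subgroup of $\ker\phi$ spanned by the remaining $g_j$. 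The recursion must be organized to terminate, after which the sign choices assemble into a global positive cone.

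For \emph{bi-orderable $\Rightarrow$ locally indicable}, let $H \le G$ be a nontrivial finitely generated subgroup; $H$ inherits a bi-order. I would use two standard structural facts from the theory of ordered groups: convex subgroups of any left-ordered group form a chain under inclusion, and convex subgroups of a bi-ordered group are normal (the latter ultimately a consequence of conjugation being an order-preserving automorphism under bi-invariance). For each generator $a_i$ of $H$ let $C(a_i)$ denote the smallest convex subgroup of $H$ containing $a_i$; the chain property forces the maximum of these finitely many $C(a_i)$ to contain every $a_j$, and so to equal $H$ itself. Since some $a_{i_0}$ has $C(a_{i_0}) = H$, the union $C$ of all \emph{proper} convex subgroups of $H$ omits $a_{i_0}$ and is therefore a proper convex normal subgroup, maximal among proper convex subgroups of $H$. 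The quotient $H/C$ is a nontrivial bi-ordered group with no proper nontrivial convex subgroups---i.e.\ archimedean---so by Hölder's theorem it embeds in $(\R,+)$ and is in particular abelian. As a finitely generated torsion-free abelian group, $H/C \cong \Z^r$ with $r \ge 1$, and projection onto any $\Z$-summand gives the required surjection $H \twoheadrightarrow \Z$.

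The main obstacle is the Burns--Hale step: one must ensure that the recursive sign-assignment genuinely terminates and that the Zorn/compactness argument correctly assembles the local choices into a global positive cone. The remaining ingredients for the bi-orderable case---the chain structure and normality of convex subgroups, plus Hölder's embedding theorem---are classical but nontrivial in their own right.
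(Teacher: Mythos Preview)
Your approach mirrors the paper's throughout: the torsion-free argument is identical; for locally indicable $\Rightarrow$ left-orderable the paper simply cites Burns--Hale while you sketch the recursion; and for bi-orderable $\Rightarrow$ locally indicable both arguments pass to the maximal proper convex subgroup $C$ of a finitely generated bi-ordered $H$, observe that $H/C$ is Archimedean, and apply H\"older. (Your $C$ and the paper's coincide: a proper convex subgroup of $H$ cannot contain the largest generator $g_k$, since by convexity it would then contain every $g_i \le g_k$ and hence all of $H$.)

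One caveat: the blanket assertion that ``convex subgroups of a bi-ordered group are normal'' is not true in general. For example, in $\Z \wr \Z$ with the lexicographic bi-order (compare the shift coordinate first, then the highest nonzero lamp), the subgroups $A_k = \{(f,0): f(i)=0 \text{ for } i>k\}$ are convex, but conjugation by the shift sends $A_k$ to $A_{k+1}$. Fortunately your proof only needs normality of the \emph{maximal} proper convex subgroup $C$, and that does follow from the ingredients you list: conjugation by any $h\in H$ is an order-automorphism, so $hCh^{-1}$ is again a proper convex subgroup, hence $hCh^{-1}\subseteq C$ by maximality; applying this with $h^{-1}$ gives equality. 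With this small repair your argument is complete and agrees with the paper's.
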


The first implication follows from Theorem \ref{holder} a classical theorem of H\"older \cite{Holder01}.  Take a finitely generated bi-ordered group $H$  generated by 
$\{g_1, \cdots, g_k\}$ and consider its convex subgroups, which are linearly ordered by inclusion.  A subset $X$ of an ordered set is called {\em convex} if the conditions $x < y < z$ and $x, z \in X$ imply that $y \in X$.   We may assume without loss that 
$1 < g_1 <  \cdots < g_k$ and that no fewer elements of the group will generate $H$.  We then consider the set $C$ which is the union of all convex subgroups of $H$ which do {\em not} contain $g_k$ and argue that $C$ is a normal convex subgroup and that $H/C$ inherits a bi-ordering which is Archimedian.  An {\em Archimedian} ordered group is one in which powers of each nonindentity element are cofinal in the ordering.  By H\"older's theorem, one can find an injective homomorphism $H/C \to \R$ and since $H/C$ is finitely generated, there is a surjective homomorphism $H/C \to \Z$.  Then the composite surjection $H \to H/C \to \Z$ shows that $H$ is indicable, and so bi-orderable groups are locally indicable.

That local indicability implies left-orderability is a theorem of Burns and Hale \cite{BH72}, which we state later (Theorem \ref{BH}) but will not prove here.  Left orderability implies no torsion, for if $g > 1$, then $g^2 > g > 1$ and inductively $g^n > 1$ for all positive powers $n$.  A similar argument applies if $g < 1$, so no nonidentity element can have finite order.\qed

None of these implications is reversible -- examples will be discussed later in this article.  Each of the properties named in the above proposition is a local property, that is, a group has that property if and only if every finitely generated subgroup has the property.  Thus, for example, all torsion-free abelian groups are bi-orderable, since it is easy to bi-order $\Z^n$ as an additive group.  Indeed, the lexicographic order is a bi-order, but there are uncountably many others.  One may take any 
vector ${\bf v} \in \R^n$ with coordinates independent over the rationals and order vectors in $\Z^n$ according to their dot product with  ${\bf v}$.  Sikora \cite{Sikora04} showed that $\Z^n$
has no `isolated' orderings in the sense that for any {\em finite} set of inequalities that hold in a given ordering, there exist {\em different} orderings in which that set of relations still hold.

It should be remarked that a group is locally indicable if and only it supports a Conradian left-order, as proved in \cite{Bro84} and with different proofs in \cite{RhemR} and \cite{Nav10b}.
A left-order is said to be {\em Conradian} if for every pair of positive elements $g, h$ there exists a positive integer $n$ such that $g < hg^n$.  This concept was introduced by Conrad in \cite{Conrad59} in the (equivalent) context of right-orders.
Recently it was shown by Navas \cite{Nav} Proposition 3.7, that $n=2$ is sufficient; that is an order is Conradian if and only if for every positive $g, h$ the inequality $g<hg^2$ holds.

\begin{theorem}[H\"older]\label{holder}
If $G$ is an Archimedian bi-ordered group, then there is an embedding $G \to \R$ which is simultaneously order-preserving and an isomorphism into the additive real numbers.  In particular, $G$ must be Abelian.
\end{theorem}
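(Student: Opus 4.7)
The plan is to construct an injective, order-preserving group homomorphism $\phi : G \to (\R, +)$; commutativity of $G$ then follows automatically from the fact that $(\R,+)$ is abelian. If $G = \{1\}$ there is nothing to prove, so fix some element $a > 1$. By the Archimedean hypothesis, for each $g \in G$ and each $n \ge 1$ there is a unique integer $q_n(g)$ with
\[
a^{q_n(g)} \;\le\; g^n \;<\; a^{q_n(g)+1}.
\]
I would set $\phi(g) := \lim_{n\to\infty} q_n(g)/n$ and aim to verify (a) this limit exists, (b) $\phi$ is a group homomorphism, and (c) $\phi(g) > 0$ whenever $g > 1$.

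For (a), the essential input is bi-invariance: multiplying two inequalities $x \le y$ and $u \le v$ to conclude $xu \le yv$ requires \emph{both} left- and right-invariance, and this is exactly what lets me raise $a^{q_n(g)} \le g^n < a^{q_n(g)+1}$ to the $k$-th power to obtain
\[
a^{k q_n(g)} \;\le\; g^{kn} \;<\; a^{k(q_n(g)+1)}.
\]
Comparing with the defining inequality for $q_{kn}(g)$ yields $k q_n(g) \le q_{kn}(g) < k q_n(g) + k$, and symmetrising in $n$ and $k$ forces $q_n(g)/n$ to be a Cauchy sequence in $\R$. Step (c) is immediate: if $g > 1$ then Archimedeanity produces $n$ with $g^n > a$, so $q_n(g) \ge 1$, hence $\phi(g) \ge 1/n > 0$, making $\phi$ strictly order-preserving and therefore injective.

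The crux is step (b), the additivity $\phi(gh) = \phi(g) + \phi(h)$, since a priori $(gh)^n \ne g^n h^n$. The plan is to sandwich $(gh)^n$ between $a^{q_n(g)+q_n(h)-C}$ and $a^{q_n(g)+q_n(h)+C}$ for some constant $C$ \emph{independent of $n$}; dividing by $n$ and passing to the limit kills $C$ and delivers additivity. Such a sandwich would be produced by iteratively using bi-invariance to shuffle factors in the expansion $(gh)^n = ghgh\cdots gh$, absorbing each local discrepancy into a commutator whose size (measured against $a$) is bounded independently of $n$. I expect this bookkeeping to be the only genuinely delicate portion of the argument; if it proves unwieldy, a cleaner alternative is to first prove $G$ is abelian directly from Archimedeanity and bi-invariance (a nontrivial commutator would have to be positive yet, by conjugation-invariance of the order, smaller than certain of its factors in a way incompatible with the Archimedean property), after which $(gh)^n = g^n h^n$ makes (b) immediate. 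Either way, once $\phi$ is constructed the proof concludes with the embedding $G \hookrightarrow \R$, which forces $G$ itself to be abelian.
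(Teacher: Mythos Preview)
The paper does not prove H\"older's theorem; it is merely stated with a citation to H\"older's 1901 paper, so there is no in-text argument to compare against. Your construction of $\phi$ via $q_n(g)/n$ is the classical one, and your arguments for (a) and for ``$g>1\Rightarrow\phi(g)>0$'' in (c) are correct.

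The real issue is (b). Route (i) does not work as you describe it: rearranging $(gh)^n=ghgh\cdots gh$ into $g^nh^n$ requires $\binom{n}{2}$ swaps of adjacent letters, each introducing a conjugate of the commutator $[g,h]$. Even granting that every such conjugate is bounded by $a^{\pm M}$ (itself nontrivial), the accumulated error in the exponent is of order $n^2$, so after dividing by $n$ the bound blows up rather than tending to zero; at best a sharper count gives $|\phi(gh)-\phi(g)-\phi(h)|\le C$, making $\phi$ a quasimorphism, which is not enough. Your fallback (ii), proving $G$ abelian first, is indeed the standard route, but the parenthetical intuition you offer is not yet an argument: extracting a genuine contradiction from a nontrivial commutator and the Archimedean hypothesis takes real work (one version: for $g,h>1$ show that both $gh^n$ and $h^ng$ lie in the same interval $[g^{m+1},g^{m+2})$, forcing $[g,h^n]\in(g^{-1},g)$ uniformly in $n$, and then leverage this). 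Finally, note that the conclusion of (c)---that $\phi$ is \emph{strictly} order-preserving, hence injective---already presupposes (b): from $g>1\Rightarrow\phi(g)>0$ you obtain only $\ker\phi=\{1\}$, and deducing $g<h\Rightarrow\phi(g)<\phi(h)$ uses $\phi(h)=\phi(g)+\phi(g^{-1}h)$, which is exactly the additivity you have not yet established.
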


A convenient way of ordering a group $G$ is to find a normal subgroup $K$ which is orderable and such that the quotient $H = G/K$ is also orderable.  That is, we consider an exact sequence
$$1 \to K \to G \to H \to 1$$

\begin{proposition}\label{exact}
If $K$ and $H$ are left-orderable, then so is $G$.  If $K$ and $H$ are bi-orderable, then $G$ is bi-orderable if and only if there exists a bi-order of $K$ invariant under conjugation by $G$.
\end{proposition}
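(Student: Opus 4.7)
The plan is to construct a left-order on $G$ using a "lexicographic" positive cone built from the given orders on $K$ and $H$, and then to refine this construction to handle the bi-order case.

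First I would fix a left-order $<_K$ on $K$ with positive cone $P_K$, a left-order $<_H$ on $H$ with positive cone $P_H$, and write $\pi : G \to H$ for the quotient map. I define the candidate positive cone of $G$ by
\[
P \;=\; \pi^{-1}(P_H) \;\cup\; P_K,
\]
so that $g \in P$ iff either $\pi(g) >_H 1$, or $\pi(g) = 1$ (equivalently $g \in K$) and $g >_K 1$. The trichotomy $P \sqcup P^{-1} \sqcup \{1\} = G$ follows immediately by applying the trichotomy for $<_H$ to $\pi(g)$ and, in the case $\pi(g)=1$, the trichotomy for $<_K$ to $g \in K$. Closure under multiplication is a case analysis on whether each factor lies in $K$ or projects nontrivially to $H$; in the mixed and purely-$H$ cases it reduces to closure of $P_H$ under multiplication, and in the purely-$K$ case it reduces to closure of $P_K$. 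This gives the first statement.

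For the bi-order claim, I would use the standard fact (implicit in the discussion of positive cones) that a left-order is a bi-order iff its positive cone is closed under conjugation by $G$. For the "if" direction, assume the orders on $K$ and $H$ are bi-orders and that $<_K$ is invariant under conjugation by all of $G$, and use the same $P$ as above. Given $g \in P$ and $f \in G$, I check $fgf^{-1} \in P$ in two cases: if $\pi(g) >_H 1$ then $\pi(fgf^{-1}) = \pi(f)\pi(g)\pi(f)^{-1} >_H 1$ by bi-invariance of $<_H$; if $g \in K$ with $g >_K 1$, then $fgf^{-1} \in K$ (since $K$ is normal) and $fgf^{-1} >_K 1$ precisely by the hypothesized $G$-conjugation invariance of $<_K$. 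For the "only if" direction, given a bi-order on $G$, I simply restrict it to $K$: the restriction is a bi-order on $K$ (total order, invariant under left- and right-multiplication by elements of $K$), and it is automatically invariant under conjugation by every element of $G$ because the bi-order on $G$ itself is.

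The main obstacle, modest as it is, lies in the bi-order direction: one must see that without the conjugation-invariance hypothesis, the lexicographic construction can fail to be right-invariant, because conjugating a small positive element of $K$ by something in $G$ could flip its sign in $<_K$. Identifying the $G$-conjugation invariance of $<_K$ as the precise missing ingredient, and verifying that it is exactly what makes the case analysis above go through, is the crux of the argument; everything else is a routine verification of the positive-cone axioms.
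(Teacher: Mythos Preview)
Your argument is correct and follows exactly the paper's approach: the paper simply states that the positive cone for $G$ is the union of the positive cone for $K$ and the pullback of the positive cone for $H$, and you have carefully verified that this works (and supplied the details for the bi-order direction, which the paper leaves implicit).
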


The positive cone for $G$ can be taken to be the union of the positive cone for $K$ and the pullback of the positive cone for $H$.

Left-orderability is closely related to dynamical properties of a group.

\begin{proposition} \label{action}
If a group acts effectively on the real line $\R$ by orientation-preserving homeomorphisms, then it is left-orderable.
\end{proposition}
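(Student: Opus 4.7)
The plan is to transfer the order of the real line to the group via a countable dense ``test set'' and a lexicographic comparison. Fix once and for all an enumeration $D=\{x_1,x_2,\ldots\}$ of a countable dense subset of $\R$ (for instance, an enumeration of $\Q$). For distinct elements $g,h\in G$, I would declare $g<h$ if $g(x_i)<h(x_i)$ at the smallest index $i$ for which $g(x_i)\neq h(x_i)$.

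The first thing to verify is that this definition makes sense, i.e.\ that such a smallest index exists whenever $g\neq h$. Since the action is effective, $g^{-1}h$ is not the identity homeomorphism, so it moves some point of $\R$; by continuity the set $\{x\in\R : g(x)\neq h(x)\}$ is open and nonempty, hence meets $D$. Thus $\{i : g(x_i)\neq h(x_i)\}$ is a nonempty subset of $\Z_{>0}$ and has a least element. Trichotomy is immediate: at this least index either $g(x_i)<h(x_i)$ or $g(x_i)>h(x_i)$, and if no such index exists then $g=h$ on a dense set of $\R$, hence $g=h$ everywhere by continuity.

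Transitivity is a short bookkeeping argument: if $g<h$ with witness index $i$, and $h<k$ with witness index $j$, then at the smaller of $i,j$ one verifies directly that $g(x_m)<k(x_m)$ while $g$ and $k$ agree on all $x_n$ with $n<m$, where $m=\min(i,j)$ (the case $i=j$ uses the strict inequalities in $\R$, the cases $i\neq j$ use that the later pair agrees on all smaller indices). Left invariance is the cleanest step and the real reason this works: for any $f\in G$, injectivity of $f$ gives $fg(x_i)\neq fh(x_i) \iff g(x_i)\neq h(x_i)$, so the witness index is unchanged when we multiply on the left by $f$; and because $f$ is an orientation-preserving homeomorphism of $\R$, it is strictly increasing, so $g(x_i)<h(x_i) \iff fg(x_i)<fh(x_i)$. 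Combining, $g<h \iff fg<fh$.

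The only genuine subtlety — and what I'd flag as the ``obstacle'' — is confirming that the countable set $D$ is large enough to separate distinct group elements; this is exactly where both effectiveness of the action and orientation-preservation are used (effectiveness to guarantee that $g\neq h$ implies $g$ and $h$ disagree somewhere, and orientation-preservation to make left multiplication compatible with the ordering of $\R$). Everything else is routine verification of the positive-cone axioms through the comparison just defined.
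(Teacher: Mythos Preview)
Your proof is correct and follows essentially the same approach as the paper: the paper reduces to showing $\Homeo_+(\R)$ is left-orderable (since an effective action gives an embedding of $G$ into $\Homeo_+(\R)$), and then orders $\Homeo_+(\R)$ by exactly the construction you describe---enumerate a countable dense subset $x_1,x_2,\dots$ of $\R$ and declare $f\prec g$ iff $f(x_i)<g(x_i)$ at the first $i$ where they differ. You simply apply this construction directly to $G$ rather than to the ambient group, and you spell out the verifications that the paper leaves as ``routine.''
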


{\em Effective} means that only the identity element of the group acts via the identity map.  Of course orientation-preserving in this context is equivalant to order-preserving.  The hypothesis of the proposition says that the group embeds isomorphically in $\Homeo_+(\R)$, the set of all 
orientation-preserving homeomorphisms of $\R$, with composition as the group operation.  Since orderability is obviously inherited by subgroups, the proposition follows from the following lemma.
\qed

\begin{lemma}
$\Homeo_+(\R)$ is left-orderable.
\end{lemma}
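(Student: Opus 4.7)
The plan is to exhibit a positive cone in $\Homeo_+(\R)$ and then invoke the criterion from Section~2 (a subset $P \subset G$ that is closed under multiplication and satisfies the trichotomy $g \in P$, $g^{-1} \in P$, or $g = 1$ gives a left-order via $g < h \iff g^{-1}h \in P$). The construction hinges on a fixed enumeration of a countable dense subset of $\R$, say the rationals, as a sequence $x_1, x_2, x_3, \dots$.

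First, I would observe that for any $f \in \Homeo_+(\R)$ with $f \neq \Id$, the set $\{x \in \R : f(x) \neq x\}$ is open (the fixed set of a continuous map is closed) and nonempty, so by density it contains some $x_n$. Let $n(f)$ denote the smallest such index. Then declare
$$P := \bigl\{\, f \in \Homeo_+(\R) \setminus \{\Id\} \;:\; f(x_{n(f)}) > x_{n(f)} \,\bigr\}.$$
Trichotomy is the first thing to check: because $f$ and $f^{-1}$ have identical fixed-point sets, $n(f) = n(f^{-1})$; and since $f^{-1}$ is strictly order-preserving, $f(x_{n(f)}) > x_{n(f)}$ is equivalent to $f^{-1}(x_{n(f)}) < x_{n(f)}$. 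Thus exactly one of $f \in P$, $f^{-1} \in P$, $f = \Id$ holds.

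Next, I would verify that $P$ is closed under composition. Given $f, g \in P$, set $m = \min(n(f), n(g))$. For every $i < m$ we have $g(x_i) = x_i$ and $f(x_i) = x_i$, hence $(fg)(x_i) = x_i$. At index $m$ one splits into cases: if $n(f) < n(g)$ then $g(x_m) = x_m$ and $(fg)(x_m) = f(x_m) > x_m$; symmetrically if $n(g) < n(f)$, using that $f$ preserves order to transport the inequality $g(x_m) > x_m$; and if $n(f) = n(g) = m$, then $g(x_m) > x_m$ combined with $f$ order-preserving gives $(fg)(x_m) = f(g(x_m)) > f(x_m) > x_m$. In each case $fg$ first differs from $\Id$ at some index $\le m$, and there it takes a larger value, so $fg \in P$.

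The main obstacle is really just the bookkeeping in this case analysis; everything else is automatic once the dense sequence is fixed. Once $P$ satisfies the two axioms, the criterion recalled in Section~2 delivers a left-invariant total order on $\Homeo_+(\R)$, completing the proof. I would remark in closing that the construction depends on the chosen enumeration, so this argument actually produces uncountably many distinct left-orders on $\Homeo_+(\R)$, but that observation is not needed for the lemma as stated.
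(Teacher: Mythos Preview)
Your proof is correct and follows essentially the same approach as the paper: fix an enumeration of a countable dense subset of $\R$ and compare homeomorphisms lexicographically along it. The paper phrases this as a direct comparison ($f \prec g$ iff $f(x_i) < g(x_i)$ at the first $i$ where they differ) and calls the verification routine, whereas you equivalently describe the associated positive cone and carry out the case analysis explicitly.
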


This can be seen by enumerating a dense subset  $x_1, x_2, \dots$ in $\R$, then comparing two functions $f$ and $g$ in $\Homeo_+(\R)$ by declaring that $f \prec g$ if and only if 
$f(x_i) < g(x_i)$ at the first $i$ at which $f(x_i) \ne g(x_i)$.  One checks routinely that $\prec$ left-orders $\Homeo_+(\R)$.
The group $\Homeo_+(\R)$ is universal for countable left-orderable groups, in the following sense.  

\begin{proposition} \label{universal}
Every countable left-ordered group $G$ is isomorphic with a subgroup of $\Homeo_+(\R)$.
\end{proposition}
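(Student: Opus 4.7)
The plan is to construct a dynamical realization of $(G,<)$: a faithful representation $G \to \Homeo_+(\R)$. First I enumerate $G = \{g_1, g_2, \ldots\}$ with $g_1 = 1$ and inductively build an order-preserving injection $t \colon G \to \R$. Set $t(g_1) = 0$; supposing $t$ has been defined on $\{g_1, \ldots, g_n\}$, if $g_{n+1}$ is $<$-larger than every $g_i$ with $i \leq n$, set $t(g_{n+1}) = \max_{i \leq n} t(g_i) + 1$; if smaller than all such $g_i$, set $t(g_{n+1}) = \min_{i \leq n} t(g_i) - 1$; otherwise let $g_j, g_k$ be the immediate $<$-predecessor and successor of $g_{n+1}$ in $\{g_1, \ldots, g_n\}$ (they exist because this set is finite) and put $t(g_{n+1}) = \tfrac12(t(g_j) + t(g_k))$. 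The resulting $t$ is strictly order-preserving, and since a nontrivial left-ordered group has neither largest nor smallest element, $t(G)$ is unbounded above and below in $\R$.

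Second, for each $g \in G$ I extend the order-preserving self-bijection $t(h) \mapsto t(gh)$ of $t(G)$ to an element $\Phi_g \in \Homeo_+(\R)$. Because $t(G)$ is unbounded, the complement $\R \setminus t(G)$ is a disjoint union of bounded open intervals, each of the form $(t(g_j), t(g_k))$ where $g_j < g_k$ are $<$-consecutive in $G$. Left multiplication by $g$ preserves $<$, so it sends consecutive pairs to consecutive pairs; hence $(t(gg_j), t(gg_k))$ is again such a gap, and I extend $\Phi_g$ to $(t(g_j), t(g_k))$ by the unique orientation-preserving affine bijection onto $(t(gg_j), t(gg_k))$. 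The resulting $\Phi_g$ is a continuous, strictly increasing bijection of $\R$.

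Finally I verify that $g \mapsto \Phi_g$ is an injective homomorphism $G \to \Homeo_+(\R)$. On $t(G)$ the identity $\Phi_{gh}(t(k)) = t(ghk) = \Phi_g(\Phi_h(t(k)))$ is immediate. On a gap $(t(g_j), t(g_k))$ both $\Phi_{gh}$ and $\Phi_g \circ \Phi_h$ are orientation-preserving affine bijections onto $(t(ghg_j), t(ghg_k))$ and so agree there as well. Injectivity is clear: $\Phi_g = \Id$ forces $t(gh) = t(h)$ for every $h \in G$, and injectivity of $t$ then yields $g = 1$. The principal subtlety, where the main bookkeeping effort lies, is in checking that the gap structure of $t(G)$ is preserved by left multiplication---specifically, that $<$-consecutive pairs in $G$ map to $<$-consecutive pairs under $g$, so that the piecewise-affine extensions on the gaps are consistently defined and compose correctly.
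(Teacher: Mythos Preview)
Your construction is exactly the \emph{dynamical realization} that the paper invokes (the paper does not actually give a proof, merely names this construction and refers to Navas and the standard textbooks). So the approach is the intended one.

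There is, however, a genuine technical gap in your extension step. You assert that $\R \setminus t(G)$ is a disjoint union of bounded open intervals of the form $(t(g_j), t(g_k))$ with $g_j, g_k$ consecutive in $G$. This is false when $G$ is not discretely ordered: if $G = \mathbb{Q}$, for instance, there are no $<$-consecutive pairs at all, yet $t(G)$ is countable and its complement is uncountable. More generally $t(G)$ need not be closed in $\R$, so $\R \setminus t(G)$ need not be open. The standard repair is to first extend $\Phi_g$ from $t(G)$ to the closure $\overline{t(G)}$ by continuity (an order-preserving bijection of $t(G)$ to itself, with $t(G)$ unbounded in both directions, extends uniquely to an order-preserving bijection of $\overline{t(G)}$), and \emph{then} interpolate affinely on the bounded open components of $\R \setminus \overline{t(G)}$. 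With that adjustment the verification of the homomorphism property goes through, using that the continuous extension to $\overline{t(G)}$ is canonical and that affine maps between bounded intervals are determined by their endpoints. Your final paragraph's ``principal subtlety'' about consecutive pairs is then subsumed by this closure argument.
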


There is a well-known construction to build such an isomorphism, called a `dynamic realization' of $G$.  We refer the reader to one of the textbooks already mentioned or the excellent article by Navas \cite{Nav} on the subject.

Bi-orderable groups have special algebraic properties, which we will use in the sequel.

\begin{lemma}\label{unique roots}
In a bi-ordered group one has unique roots.  That is, if $g^n = h^n$ for some positive integer $n$, then $g=h$.
\end{lemma}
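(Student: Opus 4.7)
The plan is to prove the contrapositive, showing that in a bi-ordered group $g \neq h$ implies $g^n \neq h^n$ for every positive $n$. Since the order is total, if $g \neq h$ then without loss of generality $g < h$, so the whole thing reduces to the monotonicity statement: in a bi-ordered group, $g < h$ implies $g^n < h^n$ for all $n \geq 1$.

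I would establish this monotonicity by induction on $n$. The base case $n=1$ is the hypothesis. For the inductive step, assume $g^k < h^k$ and compare $g^{k+1}$ and $h^{k+1}$ through the intermediate element $gh^k$. Left-multiplying the inductive hypothesis $g^k < h^k$ by $g$ gives $g^{k+1} < gh^k$, using left-invariance. Right-multiplying the original inequality $g < h$ by $h^k$ gives $gh^k < h^{k+1}$, and this step is exactly where right-invariance is used. Concatenating these two inequalities yields $g^{k+1} < h^{k+1}$, completing the induction.

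With the monotonicity in hand, the lemma follows immediately: if $g \neq h$, then either $g < h$ or $h < g$, and the monotonicity converts each case into $g^n < h^n$ or $h^n < g^n$ respectively, both contradicting $g^n = h^n$.

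There is not really a hard step here; the only subtlety worth flagging is that the corresponding statement fails for left-orderable groups that are not bi-orderable, and the proof makes transparent why: the inductive step genuinely consumes right-invariance in passing from $g < h$ to $gh^k < h^{k+1}$. This also explains why Lemma \ref{unique roots} is listed among the "special algebraic properties" of bi-ordered groups rather than all left-ordered ones.
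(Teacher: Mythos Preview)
Your proof is correct and follows essentially the same route as the paper's. The paper phrases the key step as the general principle that in a bi-ordered group one can multiply inequalities ($g<h$ and $g'<h'$ imply $gg'<hh'$), then applies it iteratively to get $g^n<h^n$; your inductive step through the intermediate $gh^k$ is precisely the special case of that principle, spelled out explicitly.
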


This is true because in a bi-ordered group (though not necessarily in a left-ordered group) it is easy to show that one can multiply inequalities: $g < h$ and $g' < h'$ imply that $gg' < hh'$.  In particular $g<h$ implies that $g^2<h^2$ and then $g^3<h^3$, etc., so that distinct elements can never have equal $n$-th powers. \qed

\begin{lemma}\label{power commutes}
In a bi-ordered group if an element $g$ commutes with a nonzero power $h^n$ of some element $h$, then $g$ commutes with $h$ itself.
\end{lemma}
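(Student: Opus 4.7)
The plan is to reduce to Lemma \ref{unique roots} (unique roots in a bi-ordered group) by exhibiting two elements with a common $n$th power.

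First, I would dispose of the sign of $n$: since $g$ commutes with $h^n$ if and only if it commutes with $(h^n)^{-1} = h^{-n}$, I may assume $n > 0$.

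Next, consider the conjugate $k := g h g^{-1}$. Since conjugation is a group automorphism, $k^n = g h^n g^{-1}$. The hypothesis that $g$ commutes with $h^n$ gives $g h^n g^{-1} = h^n$, so $k^n = h^n$. Now Lemma \ref{unique roots} applies to the bi-ordered group and the positive integer $n$, yielding $k = h$, i.e.\ $ghg^{-1} = h$, which is exactly $gh = hg$.

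There is essentially no obstacle here: the only subtlety is remembering that Lemma \ref{unique roots} is stated for positive exponents, so the preliminary step of absorbing the sign of $n$ into an inversion is needed, but it is immediate from the fact that an element commutes with $x$ if and only if it commutes with $x^{-1}$.
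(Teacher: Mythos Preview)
Your proof is correct and is essentially the same argument as the paper's, just packaged slightly differently. The paper argues directly: assuming $h \ne g^{-1}hg$, say $h < g^{-1}hg$, one multiplies this inequality by itself $n$ times to obtain $h^n < g^{-1}h^ng$, contradicting the commutation hypothesis. You instead invoke Lemma~\ref{unique roots} as a black box on the pair $h$ and $ghg^{-1}$; since that lemma is proved by exactly the same ``multiply the inequality by itself'' trick, the two arguments have identical content.
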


If $g$ and $h$ do not commute suppose without loss that $n > 1$ and $h < g^{-1}hg$.  Then multiplying this inequality by itself repeatedly we conclude the contradiction $h^n < g^{-1}h^ng$. \qed 

\begin{proposition} \label{freeproduct}
If $G$ and $H$ are left-orderable (resp. bi-orderable), then the same is true of their free product
$G * H$.  In particular, free groups are bi-orderable.
\end{proposition}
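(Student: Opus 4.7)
My plan splits the statement into the left-orderable and bi-orderable claims. The corollary on free groups follows from the bi-orderable claim applied to $F_n = \Z \ast \cdots \ast \Z$ together with the fact that bi-orderability is a local property.

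For the left-orderable case, I would use the dynamical characterization of Proposition~\ref{universal}. Since left-orderability is local, I may assume $G$ and $H$ are countable, in which case both embed into $\Homeo_+(\R)$. The task is then to combine the two actions into a faithful action of $G \ast H$. I would do this through the Bass--Serre tree $T$ of the splitting $G \ast H$: the left-orders on $G$ and $H$ supply compatible linear orders on the stars around $G$- and $H$-vertices, giving $T$ a $G \ast H$-invariant planar embedding. The induced linear order on the ends of $T$ realizes $G \ast H$ as a subgroup of $\Homeo_+(\R)$, so left-orderability follows.

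For the bi-orderable case, I would proceed algebraically via a Magnus-style expansion. For a free group $F$ on generators $\{x_\alpha\}$, the Magnus embedding $x_\alpha \mapsto 1 + X_\alpha$ into the units of the noncommutative power series ring $\Z\langle\!\langle X_\alpha\rangle\!\rangle$ is injective; ordering noncommuting monomials first by total degree and then lexicographically, and then ordering two series by the sign of the coefficient of the smallest monomial on which they disagree, yields a bi-order on the units whose restriction to $F$ is a bi-order. For a general $G \ast H$ with $G$ and $H$ bi-ordered, I would generalize this by using a noncommutative series ring with alphabet indexed by $(G\setminus\{1\}) \sqcup (H\setminus\{1\})$, with the order on the alphabet determined by the factor bi-orders and relations enforcing the group operations within each factor. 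An alternative is to apply Proposition~\ref{exact} to the exact sequence $1 \to K \to G\ast H \to G\times H \to 1$, where the kernel $K$ is free.

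The main obstacle is the bi-orderable case, and specifically the verification of two-sided invariance: left-invariance of the leading-term order comes almost for free from the construction, but right-invariance requires that the chosen monomial ordering be compatible with both the $G$- and $H$-bi-orders simultaneously in a way that is preserved when multiplying a power series on the right. In the alternative approach via Proposition~\ref{exact}, the analogous difficulty becomes showing that the bi-order on $K$ is preserved by the conjugation action of $G\ast H$, which is similarly delicate.
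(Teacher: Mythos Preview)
The paper does not actually prove the free-product assertion; it attributes that to Vinogradov and then gives a self-contained argument only for the special case of finite-rank free groups, via the lower central series: one bi-orders each free-abelian quotient $\Gamma_i(F)/\Gamma_{i+1}(F)$ and declares a nonidentity $f$ positive according to its image in the last such quotient in which it survives.  Your Magnus-expansion ordering of $F$ is essentially this same construction seen through the identification of $\Gamma_i/\Gamma_{i+1}$ with the degree-$i$ homogeneous component of the Magnus ring, so for free groups the two approaches coincide in substance.

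Where your route genuinely differs is in logical direction.  The paper proves bi-orderability of free groups directly and invokes Vinogradov as a black box for $G*H$; you instead try to establish the free-product statement first and deduce free groups as the case $\Z * \cdots * \Z$.  Your Bass--Serre tree sketch for the left-orderable case is a legitimate line of argument and goes beyond what the paper offers there.  For the bi-orderable case, however, the obstacle you flag is precisely the content of Vinogradov's theorem: making either the ``generalized Magnus'' order two-sided, or the bi-order on the free kernel of $G*H \to G\times H$ invariant under conjugation by all of $G*H$, is the nontrivial step, and your outline does not get past it.  Consequently you should not present ``free groups are bi-orderable'' as a \emph{corollary} of your free-product argument, since that part of your argument is incomplete; it is cleaner, as the paper does, to bi-order free groups directly via the lower central series (equivalently, via the Magnus embedding you describe) and treat the general $G*H$ statement as a separate, harder fact.
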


This is a nontrivial result of Vinogradov \cite{Vin} which we will not prove here.  However, it is instructive to see how one might bi-order a free group $F$ of finite rank.  The lower central series
$\G_1(F) \supset \G_2(F) \supset \cdots$ is defined by $\G_1(F) = F$, and for $i \ge 1$ let 
$\G_{i+1}(F) := [F, \G_i(F)]$, the group generated by commutators $f^{-1}g^{-1}fg$ with $f \in F$ and 
$g \in \G_i(F)$.  It is well-known that for free nonabelian groups the lower central series does not terminate, but $\cap_{i=1}^\infty \G_i(F) = \{1\}$ and the lower central quotients
$\G_i(F) / \G_{i+1}(F)$ are free abelian groups of finite rank, which we know how to bi-order.  Indeed, if we choose arbitrary orders on these quotients, then we obtain a positive cone for a  bi-order of $F$ by declaring a nonidentity element $f$ to be positive if and only if its class in $\G_i(F) / \G_{i+1}(F)$ is positive in the chosen ordering, where $\G_i(F)$ is the last group in the series that contains $f$. \qed

Regarding the number of possible orders on a group, there is a curious difference between left- and bi-orderability.

\begin{proposition}
A left-orderable group has either finitely many left-orders or the number of left-orders is uncountable.  However, there exist bi-ordered groups which support exactly a countable infinity of bi-orders. 
\end{proposition}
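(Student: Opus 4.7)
The plan for the first statement is to topologize the set $\mathrm{LO}(G)$ of left-orders on $G$. Identify each left-order with its positive cone $P \subseteq G$, regarded via its indicator function as an element of $\{0,1\}^G$. Equipped with the Tychonoff product topology, $\{0,1\}^G$ is compact, Hausdorff and totally disconnected, and the three defining conditions for a positive cone---closure under multiplication, $P \cap P^{-1} = \emptyset$, and $P \cup P^{-1} = G \setminus \{1\}$---are each the intersection, over elements or pairs of elements of $G$, of clopen subbasic conditions, hence cut out a closed subspace. Thus $\mathrm{LO}(G)$ is compact, Hausdorff, totally disconnected, and (for countable $G$) metrizable. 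A basis for its topology is given by clopen sets of the form $\{P : g_1,\dots,g_k \in P\}$, so a left-order is \emph{isolated} in $\mathrm{LO}(G)$ exactly when finitely many sign prescriptions force the whole positive cone. This is the language in which Sikora's theorem, cited earlier for $\Z^n$, is phrased.

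If $\mathrm{LO}(G)$ has no isolated points and is nonempty, then as a nonempty compact metrizable perfect space it contains a copy of the Cantor set, and is therefore uncountable. So it suffices to rule out the remaining possibility: that $\mathrm{LO}(G)$ is countably infinite with at least one isolated point. By the Baire Category Theorem, any countably infinite compact Hausdorff space must contain an isolated point, so the question reduces to showing that \emph{if $\mathrm{LO}(G)$ contains an isolated left-order, then $\mathrm{LO}(G)$ is finite}. This is Linnell's theorem, and the hard part of the argument. Its proof is a delicate combinatorial-cum-group-theoretic analysis showing that the finite system of inequalities which isolates one order propagates, under the group action on $\mathrm{LO}(G)$ by conjugation of positive cones, to finite determination of every order. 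I would invoke Linnell's theorem here rather than reproduce its proof; combined with the Cantor--Bendixson observation above, it yields the dichotomy.

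For the second statement, the plan is to exhibit an explicit bi-orderable group $G$ whose space of bi-orders is countably infinite. The key structural input is the second half of Proposition~\ref{exact}: bi-orders behave badly under extensions, since only those bi-orders on the kernel which are invariant under the conjugation action of $G$ lift. This conjugation-invariance is far more restrictive than anything present on the left-order side, and can be tuned to cut the space of bi-orders down to a countable set. Concretely, I would take a finitely generated torsion-free nilpotent group and build its bi-orders inductively along the lower central series as in the proof sketch of Proposition~\ref{freeproduct}: at each stage the bi-orders on the free-abelian quotient $\Gamma_i(G)/\Gamma_{i+1}(G)$ must be invariant under the induced action of $G/\Gamma_i(G)$. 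I would choose the group so that this invariance forces all but one of the central quotients to contribute only finitely many choices, while a single $\Z$-parameter of choices survives, producing exactly $\aleph_0$ bi-orders in total. The main work in this part is the bookkeeping needed to enumerate all bi-orders and verify that no further ones exist; the conceptual point, by contrast with Linnell's theorem, is that conjugation-invariance in the bi-ordered setting permits a countably infinite order space.
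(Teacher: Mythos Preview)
The paper itself does not prove this proposition: it simply attributes the first sentence to Linnell and the second to Buttsworth and moves on. So your proposal is already more detailed than what appears in the text.

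For the first statement your topological framework is exactly the one Linnell uses: $\mathrm{LO}(G)$ is a closed subspace of $\{0,1\}^G$, hence compact Hausdorff, and the dichotomy reduces to showing that an infinite $\mathrm{LO}(G)$ has no isolated points. Two small remarks. First, you restrict to countable $G$ to get metrizability and a Cantor set, but this is unnecessary: a nonempty perfect compact Hausdorff space is uncountable by Baire alone, so the argument covers all $G$. Second, your description of the hard step as ``propagating via conjugation of positive cones'' is not quite how Linnell's proof goes; his argument analyzes convex subgroups of a putative isolated order and shows the group must decompose as a finite chain of Conradian jumps of a very restricted (Tararin) type, forcing $\mathrm{LO}(G)$ finite. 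But you are right to black-box this step.

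For the second statement your proposed mechanism does not work as stated. In a nilpotent group the conjugation action of $G$ on each lower-central quotient $\Gamma_i/\Gamma_{i+1}$ is \emph{trivial}, since $[G,\Gamma_i]\subseteq\Gamma_{i+1}$; so the invariance condition you want to exploit imposes no constraint whatsoever on the orders of those quotients. Moreover, not every bi-order on a nilpotent group arises from the lower-central filtration, so even if you could count the filtered ones you would not have counted them all. Buttsworth's actual examples are built by a different device, carefully controlling the lattice of convex subgroups so that the bi-orders are parametrized by a countable set; the point is not conjugation-invariance cutting down choices on central quotients, but rather engineering the convex-subgroup structure directly. Your conceptual summary---that the extra rigidity of bi-orders permits countably infinite order spaces where left-orders cannot---is correct, but the concrete plan you sketch would need to be replaced.
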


The first sentence is a deep theorem of Linnell \cite{Lin}, and Buttsworth \cite{Buttsworth71} gives  examples verifying the second. \qed

\section{Braid groups}

A beautiful connection between topology and algebra is through Artin's braid groups.  For each positive integer $n$ one considers $n$ strings in 3-space which are monotone in one direction, and disjoint, but possibly intertwined, and begin and end at specified points in two parallel planes.   The product of braids is concatenation, as illustrated in Figure 1.  Two braids are equivalent if one deforms to the other through a one-parameter family of braids, with endpoints fixed at all times.   The identity in the $n$-strand braid group $B_n$ is represented by a braid with no crossings -- the strands can be taken as straight lines.

According to Artin \cite{Artin} for each $n \ge 2$,  $B_n$  has generators $\s_1, \dots, \s_{n-1}$, in which $\s_i$ is the simple braid in which all the strands are straight, except that the strand labelled $i$ crosses over the strand labelled $i+1$.  These generators are subject to the relations $\s_i\s_j = \s_j\s_i$ if $|i-j| > 1$ and 
$\s_i\s_j\s_i = \s_j\s_i\s_j$ when $|i-j| = 1$.

Each $n$-strand braid has an associated permutation of the set $\{1, \dots, n\}$ which records how the strands connect the endpoints of the various strands.  In other words, there is a homomorphism
$B_n \to S_n$, where $S_n$ denotes the symmetric group on $n$ elements, in which 
$\s_i$ is sent to the simple permutation interchanging $i$ and  $i+1$.  This homomorphism is surjective -- it is easy to see that any permutation of $\{1, \dots, n\}$ can be realized by infinitely many braids (if $n > 1$).

\begin{figure}[htpb]
%\labellist
%\small\hair 2pt
%\pinlabel $\xrightarrow{\hspace*{1cm}}$ at 300 125
%\endlabellist
\centering
\includegraphics[scale=0.85]{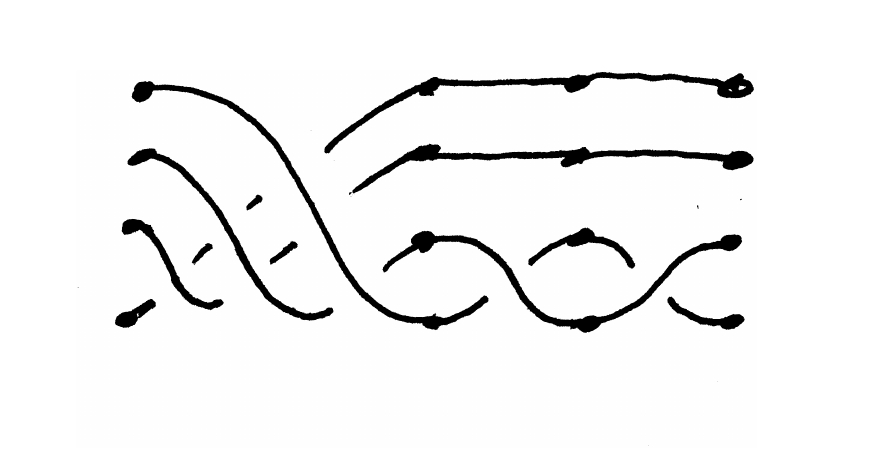}
\label{braid}
\caption{The product of $\Delta_4, \s_1$ and $\s_1^{-1}$ in $B_4$}
\end{figure}

My own interest in orderable groups began when I learned of Patrick Dehornoy's \cite{Deh}  remarkable proof of Theorem \ref{braidsLO}.

  I had been working on a conjecture of J. Birman, which involved computations in the group ring 
$\Z B_n$.  In those calculations one always had to worry about zero divisors (for example to argue that $ab = ac \implies b=c$). Although it was well-known that $B_n$ is torsion-free, I didn't know if there were any zero divisors in $\Z B_n$.   My worries were over when I learned of Dehornoy's ordering of $B_n$ and the fact that the group rings of left-orderable groups do not have zero divisors.  Whether the group ring of an arbitrary torsion-free group has zero divisors is still an open question.  The book \cite{DDRW} discusses the ordering of braid groups in great detail; I will only sketch the ideas of the ordering here.

\begin{theorem}\label{braidsLO}
The braid groups $B_n$ are left-orderable. 
\end{theorem}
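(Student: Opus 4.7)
The plan is to follow Dehornoy's original combinatorial approach by producing an explicit positive cone $P \subset B_n$. Call a word $w$ in $\s_1^{\pm 1}, \dots, \s_{n-1}^{\pm 1}$ \emph{$\s_i$-positive} if $\s_i$ appears in $w$ only with positive exponent, and no $\s_j^{\pm 1}$ with $j < i$ appears at all. A braid $\b \in B_n$ is \emph{$\s$-positive} if some representative word is $\s_i$-positive for some $i$, and \emph{$\s$-negative} if $\b^{-1}$ is $\s$-positive. I would take $P$ to be the set of $\s$-positive braids and try to verify the three positive-cone axioms laid out in Section 2.

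Closure of $P$ under multiplication is immediate: concatenating a $\s_i$-positive word with a $\s_j$-positive word yields a $\s_{\min(i,j)}$-positive word, so $P \cdot P \subseteq P$. The substantive content lies in the trichotomy: every nontrivial braid is either $\s$-positive or $\s$-negative, and no braid is both. This splits into two halves — a \emph{completeness} claim, that every element of $B_n \setminus \{1\}$ admits a $\s$-definite representative, and a \emph{disjointness} claim, that no $\s$-positive word represents the identity. Together these give $B_n = P \sqcup \{1\} \sqcup P^{-1}$, which is exactly what is needed to define the left-order.

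The main obstacle, and indeed the technical heart of the theorem, is precisely this trichotomy. To attack it I would introduce Dehornoy's \emph{handle reduction} procedure: given a word $w$, one locates a subword of the form $\s_i^\epsilon u \s_i^{-\epsilon}$ in which $u$ involves only letters $\s_j^{\pm 1}$ with $j > i$, and rewrites it using the braid relations $\s_i\s_j=\s_j\s_i$ for $|i-j|>1$ and $\s_i\s_j\s_i=\s_j\s_i\s_j$ for $|i-j|=1$ in such a way as to eliminate the outer pair. If one can prove that iterating this procedure always terminates in a $\s$-definite word, both completeness and disjointness follow at once — an equivalent of the identity must reduce to the empty word while any supposed leading $\s_i$ would survive reduction, a contradiction. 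The difficulty is precisely the termination proof; Dehornoy's original argument drew on a quite surprising input from set theory (the algebra of elementary embeddings), while later geometric arguments of Larue, Fenn--Greene--Rolfsen--Rourke--Wiest, and others used the action of $B_n$ on curves in the $n$-punctured disk to obtain the same conclusion.

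As an entirely different route, I could instead invoke Proposition \ref{action}. The Nielsen--Thurston identification of $B_n$ with the mapping class group of a disk with $n$ marked points, together with a hyperbolic structure on the disk whose boundary is geodesic, produces a faithful order-preserving action of $B_n$ on the set of lifts of a boundary point to the universal cover — a copy of $\R$. In this approach the hard combinatorics is replaced by verifying faithfulness of this action, which is a consequence of the Alexander method for mapping classes. Either way, the crux is the same: ruling out the possibility that a braid dressed up as $\s$-positive (respectively, acting trivially on the lifted boundary) could nevertheless equal the identity.
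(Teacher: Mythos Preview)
Your outline matches the paper's: define Dehornoy's cone of $i$-positive braids, note that closure under multiplication is trivial, identify the trichotomy as the real content, and point to the mapping-class-group route via Proposition~\ref{action} as an alternative --- the paper is in fact terser than your sketch, simply citing \cite{Deh} and \cite{DDRW} for the trichotomy without describing handle reduction at all. One small correction: termination of handle reduction yields \emph{completeness} (every word reduces to one that is empty or $\s$-definite), but it does not by itself give \emph{disjointness} --- a $\s$-positive word may already be handle-free, so showing that such a word cannot represent the identity still requires a separate argument, typically the geometric ones you allude to (Larue's curve diagrams, or the faithful action on $\R$).
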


Dehornoy defines a positive cone $P$ for $B_n$ in the following manner.  If $\b \in B_n$, call it {\em $i$-positive} if $\b$ has an expression as a word in the generators in which $\s_i$ appears with only positive exponents, and no $\s_j$ appears for $j<i$.  The positive cone is defined to be the set of all braids which are $i$-positive for some $i \in \{1, \dots, n-1\}$.  It is clear that $P$ is closed under multiplication.  The tricky part is to show that for any $\b \ne 1$ exactly one of $\b$ and $\b^{-1}$ is $i$-positive for some $i$.  The interested reader should consult \cite{Deh} or \cite{DDRW} for details.

Other methods of constructing left-orders of $B_n$, using its interpretation as a mapping class group, were later given in \cite{FGRRW99} and  \cite{SW00}.  The latter paper constructs infinitely many left-orders of $B_n$ (for $n>2$) using the Nielsen-Thurston theory of automorphisms of surfaces to show that $B_n$ acts on the real line.

\begin{proposition}\label{braidsnotbo}
For $n \ge 3$ the braid group $B_n$ is not bi-orderable.
\end{proposition}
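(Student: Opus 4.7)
The plan is to derive a contradiction from the braid relation $\sigma_1\sigma_2\sigma_1 = \sigma_2\sigma_1\sigma_2$ using Lemma \ref{unique roots}, which applies in any bi-ordered group. Since both left- and bi-orderability are inherited by subgroups, it suffices to rule out a bi-order on the subgroup of $B_n$ generated by $\sigma_1$ and $\sigma_2$ for any $n \geq 3$.

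First I would exhibit two distinct elements of $B_n$ with equal cubes. Using only the braid relation,
\[
(\sigma_1\sigma_2)^3 = (\sigma_1\sigma_2\sigma_1)(\sigma_2\sigma_1\sigma_2) = (\sigma_1\sigma_2\sigma_1)^2,
\]
and by the same regrouping
\[
(\sigma_2\sigma_1)^3 = (\sigma_2\sigma_1\sigma_2)(\sigma_1\sigma_2\sigma_1) = (\sigma_1\sigma_2\sigma_1)^2.
\]
Hence $(\sigma_1\sigma_2)^3 = (\sigma_2\sigma_1)^3$ in $B_n$.

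Next, suppose for contradiction that a bi-order on $B_n$ exists. By Lemma \ref{unique roots}, the coincidence of cubes would force $\sigma_1\sigma_2 = \sigma_2\sigma_1$. To rule this out I would invoke the permutation homomorphism $B_n \to S_n$ discussed earlier: it sends $\sigma_1\sigma_2$ and $\sigma_2\sigma_1$ to the two distinct nontrivial $3$-cycles in $S_3 \subseteq S_n$, so these elements cannot be equal in $B_n$. This contradiction completes the argument.

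The argument is almost entirely algebraic, and I do not anticipate any serious obstacle; the main conceptual point is to recognize that $\sigma_1\sigma_2$ and $\sigma_2\sigma_1$ are two distinct cube roots of the same element $\Delta^2 := (\sigma_1\sigma_2\sigma_1)^2$, which is forbidden by the unique-roots property of bi-orders. It is worth flagging in the write-up that this does not conflict with Theorem \ref{braidsLO}, since uniqueness of roots is a feature of bi-orders that can (and, by Dehornoy's theorem, must) fail for a left-order.
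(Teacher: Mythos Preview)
Your argument is correct and is essentially the same as the paper's: both show $(\sigma_1\sigma_2)^3 = (\sigma_2\sigma_1)^3$ from the braid relation, observe via the permutation homomorphism that $\sigma_1\sigma_2 \neq \sigma_2\sigma_1$, and invoke Lemma~\ref{unique roots}. The only cosmetic difference is that you pass through $(\sigma_1\sigma_2\sigma_1)^2$ as the common cube, whereas the paper writes the intermediate step as $(\sigma_2\sigma_1\sigma_2)(\sigma_1\sigma_2\sigma_1)$.
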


To see this, consider the generators $\s_1$ and $\s_2$.  We note that $\s_1\s_2$ is not equal to $\s_2\s_1$.  In fact their associated permutations $(132)$ and $(123)$, respectively, are different.
On the other hand we have, using the braid relation,
$$(\s_1\s_2)^3 = (\s_1\s_2\s_1)(\s_2\s_1\s_2) = (\s_2\s_1\s_2)(\s_1\s_2\s_1) = (\s_2\s_1)^3.$$
If the braid group were bi-ordered, this equation would contradict the Lemma \ref{unique roots}. 

We note that $B_3$ is an example of a locally indicable group which is not bi-orderable.

An important class of braids are the {\em pure} braids, that is the kernel of the homomorphism   
$B_n \to S_n$ mentioned above.  They form a normal subgroup $P_n$ of $B_n$ of index $n!$.

\begin{theorem}\label{pure}
The pure braid groups $P_n$ are bi-orderable. 
\end{theorem}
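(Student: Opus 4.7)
The plan is to prove this by induction on $n$, combining Artin's ``combing'' short exact sequence with Proposition~\ref{exact}. The base case $P_2\cong\Z$ is bi-ordered as an abelian group.

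For the inductive step, deleting the last strand gives a \emph{split} short exact sequence
\[
1 \to F_{n-1} \to P_n \to P_{n-1} \to 1,
\]
in which the kernel $F_{n-1}$ is the free group of rank $n-1$ generated by the standard loops $x_1,\dots,x_{n-1}$ describing the last endpoint winding around each of the remaining $n-1$ endpoints. By induction $P_{n-1}$ is bi-orderable, and $F_{n-1}$ is bi-orderable by Proposition~\ref{freeproduct}. Thus, by Proposition~\ref{exact}, it suffices to exhibit a bi-order on $F_{n-1}$ invariant under the conjugation action of all of $P_n$.

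I would use the lower-central-series bi-order on $F_{n-1}$ constructed in the sketch of Proposition~\ref{freeproduct}. An automorphism of $F_{n-1}$ preserves this bi-order as soon as it acts as the identity on every quotient $\G_i(F_{n-1})/\G_{i+1}(F_{n-1})$. The conjugation action of $P_n$ on $F_{n-1}$ is a special case of Artin's faithful representation of $B_n$ on $\mathrm{Aut}(F_{n-1})$, and the defining feature of a \emph{pure} braid in this representation is that each generator $x_i$ is sent to a conjugate $w_i x_i w_i^{-1}$, with $w_i\in F_{n-1}$ depending on the braid. Conjugation by elements of $F_{n-1}$ itself is of course of the same form.

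The heart of the argument --- and its main obstacle --- is to verify that any automorphism $\phi$ of a free group sending each generator to a conjugate of itself acts trivially on every $\G_i/\G_{i+1}$. The case $i=1$ is immediate from the abelianization. For $i\geq 2$, write $\phi(x_j)=x_j c_j$ with $c_j=[x_j^{-1},w_j^{-1}]\in\G_2$. Any basic $i$-fold commutator in the generators, when each $x_j$ is replaced by $x_j c_j$ and expanded via the standard commutator identities, yields the original commutator plus a sum of terms each containing at least one $c_j$; every such extra term lies in $[\G_2,\G_{i-1}]\subseteq\G_{i+1}$. Hence $\phi$ is the identity modulo $\G_{i+1}$, which closes the induction and shows that $P_n$ is bi-orderable.
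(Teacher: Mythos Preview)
Your argument is correct and follows the same route as the paper: Artin's combing exact sequence together with Proposition~\ref{exact}, using the lower-central-series bi-order on the free kernel and the fact that pure braids act trivially on the abelianization (hence on every $\G_i/\G_{i+1}$).  The paper defers the verification of this last point to \cite{RZ} and \cite{KR} while you sketch it directly; one small slip is that the relevant Artin action is that of $B_{n-1}$ (via the splitting), not of $B_n$, on $F_{n-1}$, but this does not affect the argument.
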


This can be proved inductively as follows.  Since $P_1 = \{1\}$ and $P_2 \cong \Z$, the base of the induction is clear.  Consider $P_n$ for $n>2$.  By deleting the last strand, one defines a surjective homomorphism $h: P_n \to P_{n-1}$.  As shown by Artin, the kernel of $h$ is a free group $F$ of rank $n-1$.   Free groups are bi-orderable and we may assume for induction that $P_{n-1}$ is also bi-orderable.  According to Proposition \ref{exact} it remains to show that $F$ can be ordered in such a way that the conjugation action of $P_n$ upon $F$ preserves the order.  This can be done, basically because the action reduces to the trivial action when one passes to the abelianization of $F$.  Details may be found in \cite{RZ} or \cite{KR}.

Alternatively, one may prove Theorem \ref{pure} by appealing to the result of Falk and Randall \cite{FR} that pure braid groups have a lower central series with the same properties cited above for free groups, and then bi-order them in the same way. \qed

The methods of ordering $P_n$ and left-ordering $B_n$ which we have described are very different.  One might ask if one can find compatible orderings.  The answer is ``no'' as shown independently in \cite{RhemR} and \cite{DD}.

\begin{theorem}
No left-order of $B_n$ restricts to a bi-order of $P_n$.
\end{theorem}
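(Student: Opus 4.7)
The plan is to argue by contradiction. Suppose $<$ is a left-order on $B_n$ whose restriction to $P_n$ is a bi-order. First I would reduce to the case $n = 3$: the subgroup of $B_n$ generated by $\s_1, \s_2$ is canonically isomorphic to $B_3$ and meets $P_n$ exactly in $P_3$, so the restriction of $<$ is again a left-order on $B_3$ whose further restriction to $P_3$ is a bi-order.

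Next I would produce a pure braid $p$ and an element $g \in B_3$ with $gpg^{-1} = p^{-1}$ and $p \ne 1$. The natural candidates are $g := \Delta = \s_1\s_2\s_1 = \s_2\s_1\s_2$, the half-twist, and $p := \s_1^2 \s_2^{-2}$. Since conjugation by $\Delta$ swaps $\s_1$ and $\s_2$, a direct calculation gives $\Delta p \Delta^{-1} = \s_2^2 \s_1^{-2} = p^{-1}$, while $p \ne 1$ is visible from its (nonzero) image in the abelianization of $P_3$.

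The heart of the proof is to show that the bi-order on $P_3$ must in fact be invariant under conjugation by $\Delta$. Granted this, $p > 1$ would force $\Delta p \Delta^{-1} = p^{-1} > 1$, which is impossible in a bi-ordered group for a nontrivial element. Invariance of the bi-order under $P_3$-conjugation is automatic from right-invariance. To extend to conjugation by $\Delta \notin P_3$, one exploits that $\Delta^2 \in P_3 \cap Z(B_3)$ has a definite sign in the bi-order, and couples the left-invariance of $<$ on $B_3$ with bi-invariance on $P_3$ by using the identity $\Delta p^{k} = p^{-k}\Delta$ to transfer sign data across the coset $P_3 \cdot \Delta$.

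I expect this last step --- propagating the bi-order's invariance from $P_3$-conjugation to conjugation by $\Delta$ --- to be the principal obstacle, since it is precisely the point where the weaker left-invariance on $B_3$ must be squeezed against the stronger bi-invariance on $P_3$ in a way that is not a formal consequence of the order axioms. A potential alternative route uses that every bi-order is Conradian (cf.\ \cite{Nav}): one would try to show that the restricted bi-order being Conradian forces Conradian behavior on $B_3$ itself, but this does not obviously yield a contradiction for small $n$, so a direct combinatorial argument of the above type seems necessary.
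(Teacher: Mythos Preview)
The paper does not supply its own proof of this theorem; it merely cites \cite{RhemR} and \cite{DD}. So there is nothing to compare against except to assess your outline on its own merits.

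Your reduction to $B_3$ and your choice of $p=\s_1^2\s_2^{-2}$ and $g=\Delta$ with $\Delta p\Delta^{-1}=p^{-1}$ are correct and natural. The difficulty is exactly where you locate it, and unfortunately your sketched method for that step does not close the gap. Using only the identity $\Delta p^{k}=p^{-k}\Delta$, left-invariance on $B_3$, and bi-invariance on $P_3$, every inequality one can derive is consistent and simply reproduces $p>1$. For instance: from $p>1$ one gets $\Delta p>\Delta$, hence $p^{-1}\Delta>\Delta$; left-multiplying by $\Delta$ gives $\Delta p^{-1}\Delta>\Delta^{2}$, i.e.\ $p\Delta^{2}>\Delta^{2}$; now using bi-invariance in $P_3$ (cancel $\Delta^{2}$ on the right) returns only $p>1$. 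Iterating with higher powers $p^{k}$ or inserting $\Delta^{2}\in Z(B_3)\cap P_3$ yields nothing new. In other words, the single relation $\Delta p\Delta^{-1}=p^{-1}$ together with the two invariance hypotheses does not force $\Delta$-conjugation to preserve positivity on $P_3$; no contradiction emerges from this data alone.

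So the proposal, as it stands, is an outline with its decisive step missing, and the mechanism you propose for that step (``transfer sign data across the coset $P_3\cdot\Delta$'') provably does not produce the needed conclusion. The actual arguments in \cite{RhemR} and \cite{DD} use more of the specific structure of $P_n\lhd B_n$ than a single inverted conjugate; you should consult those papers for the missing idea rather than trying to push the $\Delta p^{k}=p^{-k}\Delta$ bookkeeping further.
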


Another interesting fact about ordering of the braid groups is that for $n \ge 3$, there are isolated orderings of $B_n$, as shown by Dubrovina and Dubrovin \cite{DD}.  For example in $B_3$, the semigroup generated by $\s_1\s_2$ and $\s_2^{-1}$ is actually a positive cone for a left-ordering.  It follows that it is the {\em unique} ordering of $B_3$ in which those two braids are greater than the identity. 

Interestingly, it is shown in \cite{NW11} that none of the Nielsen-Thurston orders are isolated.  Indeed many of them (including the Dehornoy order) are limit points of their conjugates in the space of left-orders.  A conjugate $<_g$ of an order $<$ of a group is defined by $x <_g y \iff
gxg^{-1} < gyg^{-1}$, where $g$ is a fixed group element.

\section{Knots and braids}

We will present a very brief review of the theory of knots and an interesting application of the braid ordering to knot theory.

There is a close connection between the braid groups and the theory of knots and links.   By a knot we mean an embedding of a circle in 3-dimensional space $\R^3$.  This models the idea of a knotted rope, but we assume the ends of the rope are attached to each other, preventing the knot from being untied by simply pulling the rope through itself.  Two knots are considered equivalent if there is an isotopy (continuous family of homeomorphisms) of $\R^3$ which at the end takes one knot to the other.  One can add knots $K_1$ and $K_2$ by tying them in distant parts of the rope, thus forming the connected sum $K_1 \sharp K_2$.  Figure \ref{knotsum} illustrates this.

\begin{figure}[htpb]
%\labellist
%\small\hair 2pt
%\pinlabel $\xrightarrow{\hspace*{1cm}}$ at 300 125
%\endlabellist
\centering
\includegraphics[scale=0.75]{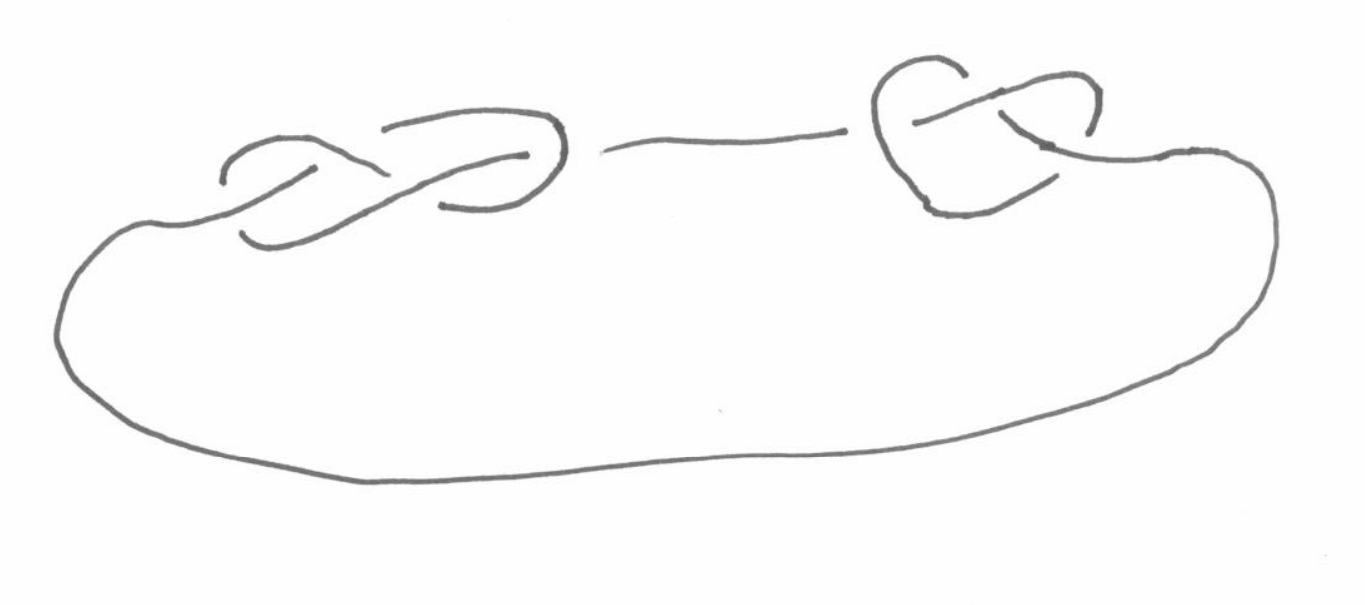}
\caption{The sum $4_1 \; \sharp \; 3_1$ of the figure-eight and the trefoil.}
\label{knotsum}
\end{figure}

The sum is easily seen to be commutative and associative, up to equivalence.  Thus the set of (equivalence classes of) knots forms an abelian semigroup, with unit the trivial knot, or {\em unknot}, which is a curve equivalent to a round circle in space.  There is a prime decomposition theorem: any knot $K$ can be written $K \cong K_1 \sharp \cdots \sharp K_p$ where each $K_i$ is {\em prime}, i.e. not expressible as a sum of two nontrivial knots.  Moreover, in this decomposition the terms are unique up to order.   Finally, it is a theorem that there are no inverses: if 
$K_1 \sharp K_2$ is equivalent to the unknot, then so are both $K_1$ and $K_2$.  This is one reason that braids are convenient in the study of knots, as the braids do form groups: every braid has an inverse -- namely its mirror image in a plane perpendicular to the direction in which the strands are monotone.

If $\b$ is a braid, its {\em closure} $\hat{\b}$ is a knot (or disjoint union of knots, called a link) formed by connecting the ends as indicated in Figure 3.  

\begin{figure}[htpb]
%\labellist
%\small\hair 2pt
%\pinlabel $\xrightarrow{\hspace*{1cm}}$ at 300 125
%\endlabellist
\centering
\includegraphics[scale=0.75]{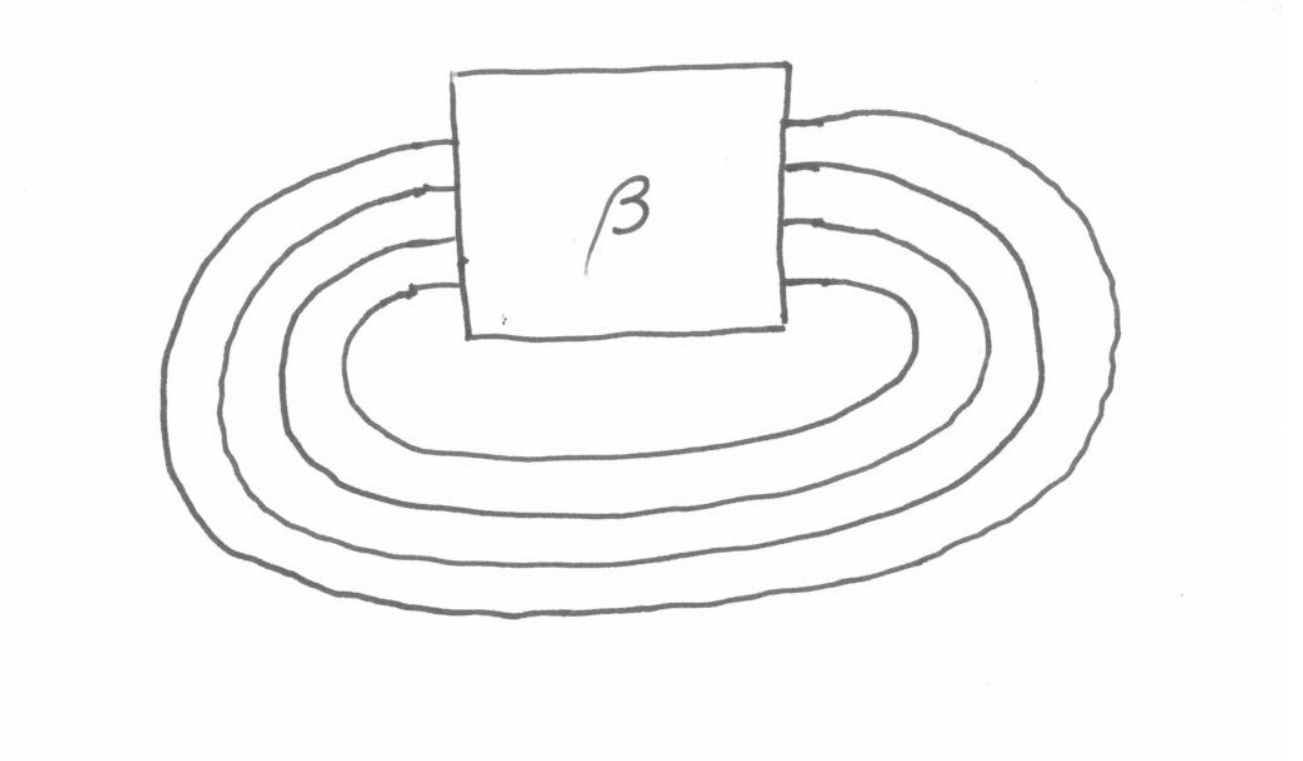}
\label{closure}
\caption{The closure of a braid}
\end{figure}

Many interesting properties of knots have arisen from this correspondence.  For example the Jones polynomial \cite{Jones} of a knot was discovered by considering a certain family of representations of the braid groups.  There is an interesting application of the Dehornoy braid order to knot theory due to Malyutin and Netsvetaev \cite{MN}.

The $n$-strand braid $\Delta_n$ is defined by the equation
$$\Delta_n = (\s_1\s_2\cdots \s_{n-1})(\s_1\s_2\cdots \s_{n-2}) \cdots (\s_1\s_2)(\s_1)$$
and corresponds to the braid formed by taking $n$ parallel strands and giving them a half-twist, as in Figure 1 for the 4-strand case.  The center of $B_n$, for $n\ge 3$ turns out to be exactly the cyclic subgroup generated by $\Delta_n^2$.

\begin{theorem}[Malyutin and Netsvetaev]
Suppose $\b \in B_n$ is a braid whose closure $\hat{\b}$ is a knot.  Assume that in the Dehornoy ordering of $B_n$ one has either $\b > \Delta_n^4$ or $\b < \Delta_n^{-4}$.  Then $\hat{\b}$ is a nontrivial prime knot.
\end{theorem}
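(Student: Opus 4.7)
The plan is to exploit the fact that $\Delta_n^2$ is central and positive in the Dehornoy order to define a $\Z$-valued ``Dehornoy floor'' function on $B_n$, and then to show that braids whose closure is either the unknot or a nontrivial connected sum have floor bounded by $1$ in absolute value. The hypothesis $\b > \Delta_n^4$ or $\b < \Delta_n^{-4}$ will force $|[\b]_D| \ge 2$, so $\hat{\b}$ cannot be the unknot and cannot be composite.

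First I would set up the floor function. Because $\Delta_n^2$ lies in the center of $B_n$ and is strictly greater than $1$ in the Dehornoy order, for every $\b \in B_n$ there is a unique integer $[\b]_D$ with $\Delta_n^{2[\b]_D} \le \b < \Delta_n^{2([\b]_D+1)}$; centrality ensures this assignment is well defined and interacts well with multiplication and conjugation. In particular, since $\Delta_n^2$ is central, $[\cdot]_D$ is invariant under conjugation in $B_n$, which is exactly what is needed to pass to Markov equivalence of the first kind. The assumption $\b > \Delta_n^4$ gives $[\b]_D \ge 2$, and $\b < \Delta_n^{-4}$ gives $[\b]_D \le -3$.

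Next I would invoke the structural consequences of $\hat{\b}$ being either trivial or composite, via Markov's theorem together with the Birman--Menasco theory of braids whose closure is either the unknot or a connected sum. If $\hat{\b}$ is the unknot, then by Markov equivalence $\b$ is related, through conjugations and (de)stabilizations $\gamma \mapsto \gamma\s_k^{\pm1}$, to the trivial braid in $B_1$; one checks that each conjugation preserves the floor and that stabilization by $\s_k^{\pm1}$ changes the floor by a controlled, small amount (essentially $0$ or $\pm 1$, because $\s_k^{\pm1}$ is sandwiched between $\Delta^{-2}$ and $\Delta^{2}$ in the relevant braid group). Chaining these bounds through a Markov sequence back to the trivial braid yields $|[\b]_D| \le 1$. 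For the composite case, the Birman--Menasco ``composite braid theorem'' lets one assume $\b$ is a product $\b_1 \b_2$ coming from a connected-sum decomposition, and a similar bookkeeping on the Dehornoy floor of each factor forces $|[\b_1 \b_2]_D| \le 1$ as well.

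The main obstacle is this last bookkeeping: one must show that the floor is well enough behaved under Markov stabilization and under the connected-sum factorization to give a universal bound independent of the chosen representative. Invariance under conjugation is automatic from the centrality of $\Delta_n^2$, but the stabilization move mixes different braid groups, so one really needs the comparison between $\Delta_n^2$ and the image of $\Delta_{n-1}^2$ inside $B_n$, together with Dehornoy's result that the generators $\s_k^{\pm1}$ lie in the ``band'' $(\Delta_n^{-2},\Delta_n^{2})$. Once these ingredients are in place, the contrapositive is immediate: the hypothesis forces $|[\b]_D| \ge 2$, which rules out both triviality and compositeness of $\hat{\b}$, leaving exactly the conclusion that $\hat{\b}$ is a nontrivial prime knot.
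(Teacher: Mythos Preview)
The paper does not actually supply a proof of this theorem; it is quoted as a result of Malyutin and Netsvetaev \cite{MN} in an expository context, so there is no ``paper's proof'' to compare against. Your outline is in fact close to the argument in the original source: the Dehornoy floor $[\b]_D$ defined via the central element $\Delta_n^2$, together with the Birman--Menasco structure theorems, is exactly the right machinery.

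That said, there is a genuine gap in the unknot step as you wrote it. You say that each Markov move changes the floor by at most $1$ and then ``chaining these bounds through a Markov sequence back to the trivial braid yields $|[\b]_D| \le 1$.'' This does not follow: a general Markov sequence between $\b$ and the trivial $1$-braid may involve arbitrarily many stabilizations and destabilizations, so summing a bound of $\pm 1$ per move gives no uniform control on $[\b]_D$. What makes the argument work is precisely that Birman--Menasco's theorem for the unknot lets you avoid stabilizations altogether: an $n$-braid with unknotted closure can be taken to the trivial braid using only exchange moves (which preserve the floor, as they are compositions of conjugations by elements in the band $(\Delta_n^{-2},\Delta_n^{2})$) and destabilizations (of which there are at most $n-1$, since each lowers the strand number). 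With that refinement the bookkeeping you describe does yield the bound $|[\b]_D|\le 1$. The composite case is handled, as you indicate, by the Birman--Menasco composite braid theorem: after conjugation $\b$ factors as a ``split'' product, and one checks directly that such products have floor in $\{-1,0\}$. Once you replace the naive Markov-chaining by these Birman--Menasco inputs, your sketch becomes the actual proof.
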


Other applications to knot theory have been found by Ito \cite{Ito} which gives a lower bound on the {\em genus} of a knot (a measure of its complexity, c.f. \cite{KL}) which is the closure of a braid, in terms of the braid's place in the Dehornoy ordering.  The connection between braid groups and knot theory has had profound applications, and I believe the orderability of braids will have further implications in knot theory and related areas of topology.

\section{Surface groups}

We recall that a topological space is called an $n$-manifold if every point has a neighborhood homeomorphic with $\R^n$, Euclidean $n$-space.  More generally an $n$-manifold with boundary may be locally modeled on the closed half-space $R^n_+$.  A compact manifold without boundary is called a {\em closed} manifold.  A {\em surface} is a 2-manifold, possibly with boundary.

If a connected surface is noncompact, or has nonempty boundary, then its fundamental group is a free group.  As already mentioned free groups are known to be bi-orderable.   It remains to consider closed surfaces.  There is an operation called connected sum of two surfaces: one removes a small open disk from each surface and attaches the two remaining surfaces by identifying the $S^1$ boundaries where the disks were removed.  It is classical that closed surfaces have been classified as follows.  

Orientable surfaces are the sphere $S^2$, the torus $T^2$, and connected sums of $g$ tori, called the orientable surface of genus $g$.  

The simplest nonorientable surface is the projective plane $\RP^2$, which may be considered as the space of lines through the origin in $\R^2$, or perhaps more concretely as a M\"obius band sewn to a disk along their $S^1$ boundaries. The Klein bottle is the connected sum of two copies of $\RP^2$.   More generally one can take the connected sum of $g$ copies of $\RP^2$ to form the genus $g$ nonorientable surface.

This is a complete list of closed surfaces. Their Euler characteristics $\chi$ are given by the formulas:

Orientable surfaces of genus $g$ (where $g = 0$ for $S^2$): $\chi = 2 - 2g$.

Nonorientable surfaces of genus $g$: $\chi = 2 - g$.

\begin{proposition}
Two closed surfaces are homeomorphic if and only if they are both orientable or both nonorientable, and they have the same genus.
\end{proposition}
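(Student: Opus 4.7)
The plan is to observe that the ``if'' direction is immediate from the classification list stated just above: by construction, any two orientable surfaces of genus $g$ are both defined as the connected sum of $g$ tori and hence are homeomorphic, and similarly in the nonorientable case. All the work therefore lies in the ``only if'' direction, which reduces to verifying that orientability and genus are each topological invariants, and that together they separate the surfaces on the list.

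For the invariance of orientability, I would use that for a closed connected $n$-manifold $M$ the top singular homology $H_n(M;\Z)$ is $\Z$ when $M$ is orientable and $0$ otherwise. Since homology is a homeomorphism invariant, no orientable surface can be homeomorphic to a nonorientable one. A more elementary alternative is to invoke the orientation double cover: orientability is equivalent to that cover being disconnected, and its number of components is a topological invariant because the cover is canonically attached to the underlying manifold. Either viewpoint lets one speak unambiguously of the orientability class of a closed surface.

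For the invariance of genus within each class, I would appeal to the Euler characteristic $\chi$. By Rad\'o's theorem every closed surface admits a triangulation, and $\chi$ computed from such a triangulation agrees with the alternating sum of Betti numbers, hence depends only on the homotopy type. The formulas $\chi = 2-2g$ in the orientable case and $\chi = 2-g$ in the nonorientable case are verified by induction on $g$ using the identity $\chi(M \sharp N) = \chi(M) + \chi(N) - 2$, which itself follows from removing the interiors of two open disks and gluing along the resulting boundary circles (contributing $-2$ to the cell count). Since $\chi$ is a strictly monotonic function of $g$ in each of the two families, homeomorphic surfaces of the same orientability type must share the same genus.

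The step I expect to be the main obstacle is the tacit reliance on the classification statement itself: the fact that every closed surface is homeomorphic to exactly one entry on the list is classical but nontrivial, its standard proof requiring a Rad\'o triangulation and the reduction of a polygonal word to one of the normal forms $a_1 b_1 a_1^{-1} b_1^{-1} \cdots a_g b_g a_g^{-1} b_g^{-1}$ or $a_1 a_1 a_2 a_2 \cdots a_g a_g$ by cut-and-paste moves. Once that classification is in hand and the invariance of $\chi$ and of orientability have been established, the present proposition is essentially a matter of bookkeeping against the two formulas for $\chi$.
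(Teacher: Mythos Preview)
Your outline is correct as a proof sketch of this classical result: orientability and Euler characteristic are homeomorphism invariants, and within each of the two families the formula for $\chi$ recovers $g$ uniquely. The reliance on Rad\'o's triangulation theorem and on the cut-and-paste classification is unavoidable at this level of generality, and you are right to flag it as the substantive input.

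That said, the paper does not actually prove this proposition. It is stated as a classical fact immediately after the enumeration of closed surfaces and the Euler characteristic formulas, with no accompanying argument; the exposition then moves directly to the remark about Dyck's surface and to the presentations of surface groups. So there is no ``paper's own proof'' to compare against: your proposal supplies an argument where the paper simply invokes the classification of surfaces as background.
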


We note the well-known fact that the connected sum of $T^2$ with $\RP^2$ is homeomorphic with the connected sum of three copies of $\RP^2$, both being nonorientable of genus 3.  This surface is known as Dyck's surface.

The fundamental groups of closed surfaces have presentations:

If $\Sigma$ is the orientable surface of genus $g$:
$$\pi_1(\Sigma) \cong \langle a_1, b_1, \dots , a_g, b_g | \; [a_1, b_1][a_2, b_2] \cdots [a_g,b_g] = 1  \rangle$$

If $\Sigma$ is the nonorientable surface of genus $g$:

$$\pi_1(\Sigma) \cong \langle a_1, \dots , a_g| \; a_1^2 a_2^2 \cdots a_g^2= 1  \rangle$$

\begin{theorem}[\cite{RW}]\label{surface}
If $\Sigma$ is a surface, then $\pi_1(\Sigma)$ is bi-orderable, with two exceptions: the Klein bottle, whose group is only left-orderable and the projective plane which is not left-orderable because it is a torsion group of order 2.
\end{theorem}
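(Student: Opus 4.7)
The plan is to work case by case through the classification of surfaces. When $\Sigma$ is non-compact or has nonempty boundary, $\pi_1(\Sigma)$ is free and bi-orderable by the lower-central-series argument sketched after Proposition~\ref{freeproduct}. Among closed surfaces, three are immediate: $\pi_1(S^2)$ is trivial; $\pi_1(\RP^2)\cong\Z/2\Z$ has torsion and so is not left-orderable by Proposition~2.1; and $\pi_1(T^2)\cong\Z^2$ is bi-orderable by any of the orders on $\Z^n$ described in Section~2.

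I would next dispose of the Klein bottle $K$. From the presentation $\pi_1(K)=\langle a,b\mid abab^{-1}\rangle$ the relation rearranges to $b^{-1}ab = a^{-1}$, so $a$ is conjugate to $a^{-1}$. In a bi-order, multiplying the inequality $1<a$ on the left by $b^{-1}$ and on the right by $b$ would yield $1 < a^{-1}$, contradicting $a>1$; so $\pi_1(K)$ is not bi-orderable. On the other hand, $\langle a\rangle$ is normal and infinite cyclic with quotient $\pi_1(K)/\langle a\rangle\cong\Z$, so Proposition~\ref{exact} gives a left-order.

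For the remaining cases --- closed orientable $\Sigma_g$ with $g\ge 2$ and closed non-orientable $N_g$ with $g\ge 3$ --- the strategy is to apply Proposition~\ref{exact} to a short exact sequence
$$1 \to F \to \pi_1(\Sigma) \to A \to 1$$
with $A$ a nontrivial free abelian group and $F$ free. For orientable $\Sigma_g$, take $A = H_1(\Sigma_g)\cong\Z^{2g}$ and $F$ the commutator subgroup; geometrically $F$ is the fundamental group of the maximal abelian cover of $\Sigma_g$, a non-compact surface, so $F$ is free. For non-orientable $N_g$ with $g\ge 3$, the abelianization $\Z^{g-1}\oplus\Z/2\Z$ surjects onto $\Z$, and the kernel corresponds to an infinite cyclic cover of $N_g$, again a non-compact surface with free fundamental group.

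The main obstacle is then what Proposition~\ref{exact} demands: a bi-order on $F$ that is preserved by the conjugation action of $A$. I would mimic the alternative proof of Theorem~\ref{pure}: work with the lower central series $F = \G_1 \supset \G_2 \supset \cdots$, observe that each $\G_i/\G_{i+1}$ is finitely generated torsion-free abelian, choose an $A$-invariant bi-order on each quotient for the induced action, and combine these into a lexicographic bi-order on $F$ exactly as in the proof of Proposition~\ref{freeproduct}. The substantive point --- and where the real work lies --- is the existence of such compatible invariant orders on the lower central quotients; this is equivalent to, or follows from, $\pi_1(\Sigma)$ being residually torsion-free nilpotent, a fact due to Baumslag in the orientable case and accessible by an analogous Magnus-type one-relator analysis in the non-orientable genus~$\ge 3$ case.
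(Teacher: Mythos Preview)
Your treatment of the easy cases (noncompact or with boundary, $S^2$, $T^2$, $\RP^2$, Klein bottle) is correct and matches the paper. For the remaining closed surfaces your approach diverges from the paper's and carries a real gap.

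The paper does not handle each surface separately. Instead it observes that every closed orientable surface of genus $\ge 2$ and every nonorientable surface of genus $\ge 3$ arises as a finite-sheeted \emph{cover} of Dyck's surface $N_3$, so each of their fundamental groups sits inside $\pi_1(N_3)$ and it suffices to bi-order that one group. For $\pi_1(N_3)$ the paper uses a very specific infinite cover: starting from $\R^2 \to T^2$ and sewing in a M\"obius band at every integer lattice point yields $1 \to F_\infty \to \pi_1(N_3) \to \Z^2 \to 1$, where the $\Z^2$-action on $F_\infty$ simply permutes the M\"obius-band generators by lattice translation. For this concrete permutation action an invariant Magnus-type order can be written down, with details deferred to~\cite{RW}.

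Your argument instead produces a different exact sequence for each surface and then invokes the lower central series of $F$ ``as for $P_n$''. But the analogy with Theorem~\ref{pure} breaks at exactly the point you need it: there the decisive fact (stated in the paper's sketch) is that conjugation acts \emph{trivially} on $F^{ab}$, hence trivially on every $\G_i(F)/\G_{i+1}(F)$, so that any order on the quotients is automatically invariant. In your sequences $F$ is $\pi_1$ of an infinite cover and $A$ acts by deck transformations; the induced action on $F^{ab}=H_1(\tilde\Sigma)$ is decidedly nontrivial, so ``choose an $A$-invariant bi-order on each quotient'' is precisely the step that requires proof. (A minor slip: these quotients are not finitely generated, since your $F$ is an infinitely generated free group.)

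Your fallback to residual torsion-free nilpotence of $\pi_1(\Sigma)$ would bypass the exact sequence entirely, and Baumslag's theorem does handle the orientable case. But for nonorientable $N_g$, $g\ge 3$, the assertion that ``an analogous Magnus-type one-relator analysis'' suffices is not justified: the relator $a_1^2\cdots a_g^2$ maps to twice a primitive element in the free abelianization, so the standard hypotheses for such arguments fail outright. The paper's covering-space reduction to a single surface with a geometrically explicit action is exactly what lets one sidestep this obstacle.
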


\begin{proof}  First consider the Klein bottle group, which has the alternative presentation
$\langle x, y | y^{-1}xy = x^{-1} \rangle$.  If one kills the infinite cyclic subgroup generated by $x$ the resulting quotient is also infinite cyclic.  Thus the Klein bottle group is in the middle of a short exact sequence flanked by orderable groups, and is therefore left-orderable.  It cannot be bi-ordered because the defining relation would then imply the contradiction that $x$ is greater than the identity if and only $x^{-1}$ is greater than the identity.

We now sketch the proof that closed surfaces other than the Klein bottle and $\RP^2$ may be bi-ordered.  The proof boils down to the single case of the nonorientable surface $\Sigma$ of genus 3, known as Dyck's surface, by the trick of considering covering spaces.  Recall that the projection map of a covering space induces injective homomorphisms of the corresponding fundamental groups.  Now nonorientable surfaces of genus $g \ge 3$ may be realized as the connected sum of the torus $T^2$ with $g - 2$ copies of $\RP^2$.  In other words, one may remove $g-2$ disjoint disks from $T^2$ and replace them by M\"obius bands to construct such a surface.  By considering a finite covering of $T^2$ by itself, and lifting a disk downstairs to multiple disks upstairs, we can construct finite sheeted covers of $\Sigma$ by all higher genus nonorientable surfaces.  Thus their fundamental groups can be considered as (finite index) subgroups of $\pi_1(\Sigma)$.  As for the orientable surfaces, we consider the oriented double cover of a nonorientable surface of genus $g \ge 3$.  Since Euler characteristics double, we see that the oriented surface upstairs in the cover will have genus $g-1$.
Therefore $\pi_1(\Sigma)$ also contains subgroups isomorphic to the fundamental groups of orientable surfaces of genus 2 or more.  This leaves the torus to consider, but its group is just 
$\Z^2$ which is obviously bi-orderable.

It remains to order $\pi_1(\Sigma)$, where $\Sigma$ is Dyck's surface, which we can also do by regarding covering spaces.  Take the universal cover of $T^2$ by $\R^2$, and choose a small disk in $T^2$, which lifts to infinitely many disks in $\R^2$, which we may take centered at the integral points $(m, n) \in \R^2$.  Now remove all the disks downstairs and upstairs and replace them by 
M\"obius bands.  This produces an infinite-sheeted covering $\tilde{\Sigma} \to \Sigma$.  One calculates that 
$\pi_1(\tilde{\Sigma} )$ is an infinitely-generated free group $F_{\infty}$ with generators represented by the central curves of the M\'obius bands that were sewn to the punctured $\R^2$, connected by tails to some fixed basepoint.  Thus we have an exact sequence
$$1 \to F_{\infty} \to  \pi_1(\Sigma) \to \Z^2 \to 1$$
in which $\pi_1(\Sigma)$ is sandwiched between bi-ordered groups.   The free group $F_{\infty}$ may be ordered, for example using a Magnus expansion, in such a way that the ordering is invariant under conjugation by elements of $\pi_1(\Sigma)$, that is, deck transformations of the covering. (see \cite{RW} for details).  We then appeal to proposition \ref{exact}.

\end{proof}

Note that this corrects a statement in the literature \cite{Lev43}, p. 201 ``... the fundamental group of a one-sided surface cannot be ordered.''
 
\section{Three-dimensional manifolds}

We will see that  many, but certainly not all, fundamental groups of 3-manifolds are left-orderable, or even locally indicable.  Orderability of these groups sheds light on the foliations, mappings and other structures related to 3-manifold theory.
A very useful criterion for left-orderability is the theorem of Burns and Hale \cite{BH72} that reduces the question to a local one.

\begin{theorem}[Burns-Hale]\label{BH}
A group $G$ is left-orderable if and only if for every nontrivial finitely generated subgroup $H$ of $G$ there exists a nontrivial left-orderable group $L$ and a surjective homomorphism $H \to L$.
\end{theorem}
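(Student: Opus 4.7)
Plan: The forward implication is immediate: if $G$ is left-orderable then every nontrivial finitely generated subgroup $H$ is itself left-orderable, and the identity map $H\to H$ supplies the required surjection.

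For the converse, assume the local hypothesis. I would first establish a finitary statement by induction on $n$: for every finite set $\{g_1,\dots,g_n\}\subset G\setminus\{1\}$ there exist signs $\varepsilon_1,\dots,\varepsilon_n\in\{\pm 1\}$ such that the semigroup $\langle g_1^{\varepsilon_1},\dots,g_n^{\varepsilon_n}\rangle_+$ generated by these elements does not contain $1$. For $n=1$, the hypothesis applied to $\langle g_1\rangle$ produces a surjection onto a nontrivial left-orderable (hence torsion-free) group, forcing $g_1$ to have infinite order, so $\varepsilon_1=+1$ works. For the inductive step, set $H=\langle g_1,\dots,g_n\rangle$ and fix a surjection $\phi:H\to L$ with $L$ nontrivial and left-ordered with positive cone $P_L$. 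Partition the indices into $A=\{i:\phi(g_i)\neq 1\}$ and $B=\{i:\phi(g_i)=1\}$; since $L\neq 1$ we have $A\neq\emptyset$ and hence $|B|<n$. For $i\in A$ I would choose $\varepsilon_i$ with $\phi(g_i)^{\varepsilon_i}\in P_L$; for $i\in B$ I would invoke the inductive hypothesis on the strictly smaller set $\{g_i:i\in B\}$. If $1$ were expressible as a positive word in the chosen $g_i^{\varepsilon_i}$, then applying $\phi$ would yield a product of factors each equal to $1$ (for $i\in B$) or in $P_L$ (for $i\in A$); since $P_L$ is a semigroup avoiding $1$, no $A$-factor can appear, so the word involves only indices in $B$ and contradicts the inductive hypothesis.

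To extract a global positive cone from this finitary statement, I would run a Tychonoff compactness argument on $\Omega:=\{+,-\}^{G\setminus\{1\}}$. For each finite $F\subset G\setminus\{1\}$, let $\Omega_F$ be the set of $\varepsilon\in\Omega$ satisfying $\varepsilon(g^{-1})=-\varepsilon(g)$ on $F\cup F^{-1}$ and $1\notin\langle g^{\varepsilon(g)}:g\in F\rangle_+$; this is closed in $\Omega$, is nonempty by the finitary statement (extending the chosen signs arbitrarily off $F$, noting that the sign condition is automatic on inverse pairs inside $F$), and the family has the finite intersection property because $\Omega_{F\cup F'}\subseteq\Omega_F\cap\Omega_{F'}$. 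Any $\varepsilon$ in the global intersection then determines $P:=\{g:\varepsilon(g)=+\}$, which is a positive cone for a left-order on $G$: the inverse condition gives $P\sqcup P^{-1}=G\setminus\{1\}$, while closure under multiplication follows by applying the defining condition for $\Omega_F$ with $F=\{g_1,g_2,g_1g_2\}$ to any $g_1,g_2\in P$.

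The main obstacle is the inductive step: one must simultaneously exploit the nontriviality of $L$ to guarantee $|B|<n$ (and thereby have room to induct on the kernel indices) and verify that the two independent sign choices on $A$ and $B$ glue into a valid assignment on all of $\{1,\dots,n\}$ — the decisive observation being that any hypothetical relation $1=\prod g_{i_j}^{\varepsilon_{i_j}}$ projects under $\phi$ to a product of elements of $P_L\cup\{1\}$ in which the $P_L$-factors cannot telescope to $1$.
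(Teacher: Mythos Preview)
Your argument is correct and is essentially the classical Burns--Hale proof: the induction on the size of a finite subset of $G\setminus\{1\}$, splitting indices according to whether they survive in the left-orderable quotient $L$, followed by a Tychonoff compactness argument on $\{\pm\}^{G\setminus\{1\}}$ to globalize. One small imprecision: when you say you extend the signs ``arbitrarily off $F$'' you must in fact extend to $F^{-1}\setminus F$ by the rule $\varepsilon(g^{-1})=-\varepsilon(g)$ (which is unambiguous, since each such element receives a single constraint) before extending arbitrarily elsewhere; otherwise condition~(a) need not hold. Your parenthetical remark shows you see the issue for inverse pairs \emph{inside} $F$, and the remaining case is equally harmless.

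As for comparison with the paper: there is nothing to compare, because the paper does not prove this theorem at all --- it simply states the result and writes ``We refer the reader to \cite{BH72} for a proof.'' Your proposal therefore goes well beyond what the paper provides, and what you have written is the standard proof one finds in the literature.
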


We refer the reader to \cite{BH72} for a proof.  This result implies that locally indicable groups are left-orderable.  We'll see shortly that the converse does not hold.

The Burns-Hale theorem is the basic ingredient for a powerful criterion for orderability of fundamental groups of 3-dimensional manifolds.  A 3-dimensional manifold $M$ is said to be {\em irreducible} if every (smooth) 2-dimensional sphere in the manifold separates $M$ into two parts, one of which is homeomorphic with a 3-dimensional ball.  As with 2-manifolds (and knots) there is a unique prime decomposition theorem, due to Milnor \cite{Milnor62} which states that every compact 3-manifold is uniquely expressible as a connected sum of prime factors.  The connected sum of two 3-manifolds is obtained by deleting a small 3-ball from each and attaching the remaining parts to each other along the resulting 2-sphere boundaries.  The connected sum of a manifold with $S^3$ results in the same manifold (up to homeomorphism), and a manifold is called {\em prime} if it is not the connected sum of two manifolds, neither being $S^3$.  An irreducible manifold is prime.  However, there is exactly one prime closed manifold which is not irreducible, namely $S^2 \times S^1$, which contains a sphere ($S^2 \times *$) which does not separate the manifold.  Of course its fundamental group, being infinite cyclic, is bi-orderable.

\begin{theorem}[\cite{BRW}] \label{fund}  Suppose $M$ is an irreducible orientable 3-manifold, possibly noncompact and possibly with boundary.  Then $\pi_1(M)$ is left-orderable if and only if there exists a nontrivial left-orderable group $L$ and a surjective homomorphism $\phi: \pi_1(M) \to L$.
\end{theorem}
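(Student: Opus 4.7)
The easy direction is immediate: if $\pi_1(M)$ is itself nontrivial and left-orderable, take $L=\pi_1(M)$ and $\phi$ the identity; if $\pi_1(M)$ is trivial there is nothing to check (under the convention that the trivial group is left-orderable). So the content is the converse, and the plan is to deduce it from the Burns--Hale criterion (Theorem \ref{BH}): it suffices to produce, for every nontrivial finitely generated subgroup $H \le \pi_1(M)$, a surjection from $H$ onto a nontrivial left-orderable group.

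Fix such an $H$, and consider the covering $p\colon M_H \to M$ with $p_*\pi_1(M_H) = H$. I would split into two cases according to $[\pi_1(M):H]$. If $H$ has finite index, then $\phi|_H\colon H \to L$ has image of finite index in $L$; since a nontrivial left-orderable group is torsion-free, hence infinite, the image is nontrivial, and being a subgroup of the left-orderable group $L$ it is itself left-orderable. This supplies the required surjection.

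The interesting case is when $H$ has infinite index, so that $M_H$ is noncompact. The plan here is to show directly that $H$ is indicable, i.e.\ admits a surjection onto $\mathbb{Z}$ (which is left-orderable). First, because $M$ is orientable and irreducible, $\pi_2(M)=0$ by the sphere theorem, so $\pi_2(M_H)=0$, and applying the sphere theorem once more shows that $M_H$ is itself irreducible and orientable. Next, Scott's compact core theorem supplies a compact 3-dimensional submanifold $C \subset M_H$ whose inclusion induces an isomorphism $\pi_1(C) \xrightarrow{\cong} \pi_1(M_H) = H$; in particular $\partial C \ne \emptyset$ since $M_H$ is noncompact.

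At this point the key step is to guarantee that $\partial C$ has a component of positive genus, so that the half-lives--half-dies inequality
\[
\operatorname{rank} H_1(C;\mathbb{Q}) \;\ge\; \tfrac{1}{2}\operatorname{rank} H_1(\partial C;\mathbb{Q})
\]
forces $H_1(C;\mathbb{Q})\neq 0$, whence $H$ surjects onto $\mathbb{Z}$. This is where I expect the main obstacle, since a priori $\partial C$ could consist only of 2-spheres. To rule this out I would use irreducibility of $M_H$: each 2-sphere component of $\partial C$ bounds a 3-ball in $M_H$, and attaching these balls to $C$ produces a closed compact submanifold $C' \subset M_H$ with $\pi_1(C')\cong H$. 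But $C'$ is then both open (as a codimension-zero submanifold of the 3-manifold $M_H$) and closed (being compact) in the connected manifold $M_H$, so $C' = M_H$, contradicting noncompactness of $M_H$. Hence some component of $\partial C$ has positive genus, the half-lives--half-dies estimate applies, and we obtain the desired surjection $H \twoheadrightarrow \mathbb{Z}$. Combining the two cases with Burns--Hale completes the proof.
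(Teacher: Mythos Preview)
Your argument is correct and follows essentially the same route as the paper's: invoke Burns--Hale, dispose of finite-index subgroups by restricting $\phi$, and for infinite-index $H$ pass to the cover, take a Scott compact core, use irreducibility to cap off sphere boundary components, and then apply half-lives--half-dies to get a surjection $H \twoheadrightarrow \Z$. Your open-and-closed argument for why $\partial C$ cannot consist entirely of spheres is a nice explicit version of what the paper leaves as ``use irreducibility.'' One small imprecision: the sphere theorem says $\pi_2 \ne 0 \Rightarrow$ there is an embedded essential sphere, not the converse, so ``$\pi_2(M_H)=0$ implies $M_H$ irreducible by the sphere theorem'' is not literally correct; that covers of orientable irreducible 3-manifolds are irreducible is a separate (and deeper) fact, due to Meeks--Simon--Yau. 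The paper's sketch glosses over this point as well.
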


We sketch a proof (which appears in \cite{BRW}) using Theorem \ref{BH} and a technique due to Howie and Short \cite{HS}.  Consider a finitely generated subgroup $H$ of  $\pi_1(M)$.  If the index of $H$ in  $\pi_1(M)$ is finite, then the restriction of $\phi$ to $H$ provides a nontrivial homomorphism to the nontrivial left-orderable group $\phi(H)$ which is finite index in $L$.  The interesting case is when the index of $H$ in  $\pi_1(M)$ is infinite.  Then there exists an infinite sheeted covering space $p:\tilde{M} \to M$, such that $p_*\pi_1(\tilde{M}) = H$.  Recall that 
$p_*$ is injective, being induced by a covering map (basepoints are being suppressed for simplicity of notation).  Since $\pi_1(\tilde{M})$ is finitely generated, a theorem of P. Scott \cite{Sco} asserts there exists a {\em compact} submanifold $C$ of 
$\tilde{M}$ so that the inclusion induces an isomorphism $i_* : \pi_1(C) \to \pi_1(\tilde{M})$.  Being compact, and lying in the noncompact manifold $\tilde{M}$, $C$ necessarily has nonempty boundary.  We then use irreducibility to argue that we may assume that $\partial C$ contains no 2-sphere components, and thus $\partial C$ has infinite first homology groups.  A standard argument than shows that $C$ itself has infinite first homology groups.  The Hurewicz homomorphism $h:\pi_1(C) \to H_1(C)$ has image an infinite, finitely generated abelian group, so one can construct a surjective homomorphism $\pi_1(C) \to \Z$.  But 
$\pi_1(C) \cong \pi_1(\tilde{M}) \cong H$, so $H$ maps onto the left-orderable group $\Z$.  \qed

A very similar argument establishes the following.

\begin{theorem}  If the irreducible orientable 3-manifold $M$ has positive first Betti number, or in other words the abelianization of $\pi_1(M)$ is infinite, then  $\pi_1(M)$ is locally indicable.
\end{theorem}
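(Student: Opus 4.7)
The plan is to mirror the proof of Theorem \ref{fund}, but sharpen each step so that it outputs a surjection onto $\Z$ (the defining property of local indicability) rather than merely onto a nontrivial left-orderable group. The hypothesis provides a surjection $\phi : \pi_1(M) \twoheadrightarrow \Z$ from the positive first Betti number, and the goal is to show that every nontrivial finitely generated subgroup $H \le \pi_1(M)$ admits a surjection $H \twoheadrightarrow \Z$.

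I would split into cases on the index of $H$ in $\pi_1(M)$. If this index is finite, then $\phi(H)$ is a finite-index subgroup of $\Z$, necessarily of the form $n\Z$ for some $n \ge 1$, so $\phi|_H$ itself surjects onto a copy of $\Z$. If the index is infinite, I would repeat the covering space construction of Theorem \ref{fund} essentially verbatim: form the covering $p : \tilde{M} \to M$ with $p_*\pi_1(\tilde{M}) = H$, apply Scott's compact core theorem to obtain a compact submanifold $C \subset \tilde{M}$ with $\pi_1(C) \cong \pi_1(\tilde M) \cong H$, observe $\partial C \ne \emptyset$ because $\tilde{M}$ is noncompact, and use irreducibility of $\tilde{M}$ (inherited from $M$) to arrange for $\partial C$ to contain no sphere components. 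Then $\partial C$ is a nonempty closed orientable surface with every component of positive genus, so $H_1(\partial C)$ is infinite; a half-lives-half-dies argument forces $H_1(C)$ to be infinite as well, and the composition $H \cong \pi_1(C) \twoheadrightarrow H_1(C) \twoheadrightarrow \Z$ completes the case.

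The main substantive step --- eliminating sphere boundary components of $C$ via irreducibility --- is identical to the corresponding step in Theorem \ref{fund}, and I expect it to remain the principal technical point. The genuinely new ingredient is the finite-index case, and that is almost automatic once one notes that every nontrivial subgroup of $\Z$ is itself infinite cyclic. It is worth remarking that, in contrast to Theorem \ref{fund}, one does not need to invoke Burns--Hale here: local indicability is verified directly from its definition, so the strengthening from ``left-orderable'' to ``locally indicable'' comes essentially for free from the strengthened hypothesis that the global homomorphism lands in $\Z$.
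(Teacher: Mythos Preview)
Your proposal is correct and matches the paper's approach: the paper simply asserts that ``a very similar argument'' to Theorem~\ref{fund} establishes this result, and you have spelled out exactly those modifications---replacing the target $L$ by $\Z$ via the first Betti number hypothesis, noting that finite-index subgroups of $\Z$ are again infinite cyclic, and observing that the infinite-index case already produced a surjection onto $\Z$. Your remark that Burns--Hale is unnecessary here is apt and worth keeping.
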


Irreducibility is not a very restrictive assumption.  For 3-manifolds the fundamental group of a connected sum of 3-manifolds is just the free product of the groups of the summands.  If all terms of a connected sum are left-orderable, then so is the sum, and similarly for bi-orderability, by applying Proposition \ref{freeproduct}.

If $K$ is a knot in $S^3 \cong \R^3 \cup \infty$, one defines its knot group to be the fundamental group of its complement $\pi_1(S^3 \setminus K)$.  Knot complements are irreducible and by Alexander duality $H_1(S^3 \setminus K) \cong \Z$.  So we conclude the following.

\begin{corollary}
All knot groups are locally indicable, and hence left-orderable.
\end{corollary}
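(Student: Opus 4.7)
The plan is to derive this immediately as a corollary of the theorem stated just above, namely the result that an irreducible orientable 3-manifold $M$ with positive first Betti number has locally indicable fundamental group. So the task reduces to verifying that the complement $M := S^3 \setminus K$ satisfies all three hypotheses, after which local indicability is automatic, and left-orderability then follows from the chain of implications in the opening proposition of Section 2 (locally indicable $\implies$ left-orderable).

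First I would note that $M$ is orientable, since it is an open subset of the orientable manifold $S^3$ and orientability is inherited by open submanifolds. Second, the excerpt explicitly asserts that knot complements are irreducible; a brief reminder that this follows from the standard fact that any 2-sphere embedded in $S^3$ bounds a 3-ball (the Schoenflies theorem), together with the observation that a sphere in $S^3 \setminus K$ bounds a ball on whichever side avoids $K$, would suffice. Third, I would invoke Alexander duality (as already flagged in the corollary's setup) to obtain $H_1(S^3 \setminus K) \cong \Z$, so the first Betti number equals $1 > 0$, and equivalently the abelianization of $\pi_1(M)$ is infinite.

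With these three hypotheses checked, the previous theorem applies verbatim and shows that $\pi_1(S^3 \setminus K)$ is locally indicable. The ``hence left-orderable'' clause is then just the middle implication of the initial proposition (whose proof, via Burns--Hale, was already deferred to Theorem~\ref{BH}). There is no substantive obstacle here; the whole content of the corollary is packaged inside the preceding theorem, and the only thing worth emphasizing in the write-up is the clean way in which standard 3-manifold topology (irreducibility of knot complements plus Alexander duality) supplies exactly the two nontrivial hypotheses required.
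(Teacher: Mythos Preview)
Your proposal is correct and follows exactly the paper's approach: the paper notes that knot complements are irreducible and that Alexander duality gives $H_1(S^3 \setminus K) \cong \Z$, then applies the preceding theorem. You have simply fleshed out the orientability hypothesis and added a word of justification for irreducibility, but the argument is the same.
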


Some knot groups are bi-orderable -- for example the knot $4_1$ commonly known as the figure eight knot has bi-orderable group, as shown in \cite{PR}.  On the other hand, the trefoil $3_1$ does {\em not} have bi-orderable group.  As it happens the group of the trefoil is isomorphic with $B_3$, which was noted to be non-bi-orderable in Proposition \ref{braidsnotbo}.  

More generally, consider the torus knot $K_{p,q}$, where $p$ and $q$ are relatively prime integers.  This is by definition a curve on the surface of a standard donut in $\R^3$ which representing the homology class 
$p\lambda + q\mu$ where $\lambda$ and $\mu$ are the standard ``longitude'' and ``meridian'' curves of the surface. 

\begin{proposition}
If $T_{p,q}$ is a nontrivial torus knot, its group is not bi-orderable.
\end{proposition}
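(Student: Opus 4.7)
The plan is to use the standard presentation of the torus knot group
$$\pi_1(S^3 \setminus T_{p,q}) \cong \langle x, y \mid x^p = y^q \rangle,$$
together with Lemma \ref{power commutes}. Here $x$ and $y$ can be taken to be meridians of the two solid tori that make up the genus one Heegaard decomposition of $S^3$, and for a nontrivial torus knot we have $|p|, |q| \geq 2$ with $\gcd(p,q) = 1$.

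First I would observe that the central element $z = x^p = y^q$ commutes with $x$ (trivially, as a power of $x$) and hence $x$ commutes with $y^q$. If the knot group were bi-orderable, then by Lemma \ref{power commutes} applied to $g = x$ and $h = y$ with $n = q \neq 0$, the element $x$ would have to commute with $y$ itself. I would then derive a contradiction by showing that $x$ and $y$ do not commute in $\pi_1(S^3 \setminus T_{p,q})$ for a nontrivial torus knot.

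To verify noncommutativity, I would recognize the presentation as the amalgamated free product $\mathbb{Z} *_{\mathbb{Z}} \mathbb{Z}$, in which two infinite cyclic groups are amalgamated along the subgroups of index $p$ and $q$ generated by $x^p$ and $y^q$. Since $\gcd(p,q) = 1$, the abelianization is $\mathbb{Z}$; if $x$ and $y$ commuted then the whole group would coincide with this abelianization and equal $\mathbb{Z}$, which is the group of the unknot. Since a nontrivial torus knot requires $|p|, |q| \geq 2$, the amalgamation is proper (both index maps are non-surjective) and standard Bass--Serre theory gives that $\langle x, y \mid x^p = y^q\rangle$ is nonabelian (indeed, it contains nonabelian free subgroups). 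Hence $x$ and $y$ cannot commute, completing the contradiction.

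The only non-routine step is the claim that $x$ and $y$ do not commute, and even this is a standard fact about torus knot groups; the rest of the argument is a direct appeal to Lemma \ref{power commutes}. So the heart of the proof is essentially algebraic: the defining relation $x^p = y^q$ forces $x$ and $y^q$ to commute, bi-orderability would then upgrade this to commutation of $x$ and $y$, and this collapses the group to its abelianization $\mathbb{Z}$, which contradicts the nontriviality of $T_{p,q}$.
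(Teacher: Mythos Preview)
Your proof is correct and follows essentially the same approach as the paper: use the presentation $\langle x,y \mid x^p=y^q\rangle$, observe that $x$ commutes with $y^q$, apply Lemma~\ref{power commutes} to force the group to be abelian under a hypothetical bi-order, and then derive a contradiction from nontriviality. The only difference is cosmetic: the paper dispatches the nonabelian step by citing the standard knot-theory fact that only the unknot has abelian group, whereas you supply an explicit argument via the amalgamated free product structure.
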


One way to see this is to look at its group $\langle a, b | a^p = b^q \rangle$.  Notice that $a$ commutes with $b^q$ so by Lemma \ref{power commutes}, if the group were bi-orderable we would conclude that the group is abelian.  A standard result of knot theory is that only the trivial knot has abelian knot group (namely $\Z$).

The papers \cite{CR} and \cite{PR} discuss general conditions for determining whether certain knot groups are bi-orderable, but the bi-orderablity question for most knots remains open.

\begin{example} Bergman \cite{Ber} described a 3-manifold whose fundamental group is left-orderable, but not locally indicable.  It may be described as the Brieskorn manifold 
$\Sigma(2, 3, 7)$, which is the intersection of the unit 5-sphere in complex 3-dimensional space 
$\C^3$ with complex coordinates $u, v, w$, and the variety defined by $u^2 + v^3 + w^7 = 0.$
It can also be described as a certain surgery on the trefoil knot (see \cite{PS97}, pp. 116-117).  Its fundamental group has the presentation
$$\pi_1(\Sigma(2, 3, 7)) \cong \langle a, b | a^7 = b^3 = (ba)^2 \rangle.$$

One can easily check that this group is perfect, that is it abelianizes to the trivial group.  In topologists jargon, it is a ``homology sphere,'' as it has the same integral homology groups as $S^3$.  It follows that $\pi_1(\Sigma(2, 3, 7))$ is not indicable and so certainly not locally indicable.  Nevertheless, Bergman gave an explicit embedding of this group in the group 
$\widetilde{\PSL(2, \R)}$, which is the universal cover of the group $\PSL(2, \R)$.  Now $\PSL(2, \R)$ acts on the circle $S^1$, for example by fractional linear transformations on $\R \cup \infty \cong S^1$.  Moreover, as a space $\PSL(2, \R)$ has the homotopy type of $S^1$, so its universal cover 
$\widetilde{\PSL(2, \R)}$ is an infinite cyclic cover, and is a group which acts effectively on the real line.  From Proposition \ref{action} we can conclude that 
$\widetilde{\PSL(2, \R)}$ is a left-orderable group, and ditto for  $\pi_1(\Sigma(2, 3, 7))$.
\end{example}

The above example is a manifold whose geometry is modeled on $\widetilde{\PSL(2, \R)}$, one of the eight Thurston geometries in dimension three.  The most important of the geometries is hyperbolic.  A closed manifold is said to be {\em hyperbolic} if it has the universal cover hyperbolic 3-space $\H^3$ so that the deck transformations are hyperbolic isometries.  One can use this to define the {\em volume} of the manifold, and by Mostow rigidity the volume is actually a topological invariant.  According to Thurston, the set of volumes of hyperbolic 3-manifolds is a well-ordered set of real numbers.

\begin{example}  The Weeks manifold $W$ is the closed hyperbolic 3-manifold of least volume
\cite{GMM}.  Its fundamental group is
$$\pi_1(W) \cong \langle a, b | babab=ab^{-2}a, ababa=ba^{-2}b \rangle.$$

The abelianization of this group is isomorphic with $\Z / 5\Z \oplus \Z / 5\Z$.  So the group is not perfect, but $W$ is a rational homology sphere, in that its homology with rational coefficients agrees with that of $S^3$.  It is shown explicitly in \cite{CD03} that this group cannot be left-ordered.  The defining relations can be rewritten as $b^{-1}ab^{-2}a=(ab)^2=ba^{-2}ba^{-1}$ and $a^{-1}ba^{-2}b=(ba)^2=ab^{-2}ab^{-1}$ so one may argue as follows: we may assume that $a>1$ if such a left-order were to exist and get a contradiction to each of the cases $b<1$, $a>b>1$ and $b>a>1$ (as the reader may easily check).
\end{example}

 An important reason for studying knots is that $3$-manifolds may be constructed from knots by a process called surgery: one removes a neighbourhood of a knot in $S^3$ and replaces it with a solid torus attached in one of infinitely many possible ways.  In fact all closed orientable 3-manifolds arise by this construction, if one uses a finite disjoint collection of knots for the surgeries.  The following is a sample application of orderability to this situation.

\begin{theorem}[\cite{CR}]
If $K \subset S^3$ is a knot with bi-orderable knot group, then surgery on $K$ cannot produce a manifold with finite fundamental group.
\end{theorem}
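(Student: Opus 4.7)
The plan is proof by contradiction. Suppose $p/q$-Dehn surgery on $K$ yields a closed 3-manifold $M$ with $\pi_1(M)$ finite. Write $G=\pi_1(S^3\setminus K)$, bi-orderable by hypothesis, and let $\mu,\lambda\in G$ denote the meridian and canonical ($0$-framed) longitude; these commute in the peripheral $\Z^2\le G$, and $\lambda\in [G,G]$ since the $0$-framed longitude is null-homologous. By construction, $\pi_1(M)=G/N$ where $N=\langle\langle \gamma\rangle\rangle$ and $\gamma=\mu^p\lambda^q$, with $\gcd(p,q)=1$. I would take $K$ nontrivial (for the unknot $\lambda=1$ and genuine lens-space surgeries occur; the statement has to be read with this understanding), so $\lambda\ne 1$.

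First the reductions. Abelianizing gives $H_1(M)\cong \Z/|p|\Z$, so $\pi_1(M)$ finite forces $p\ne 0$. Set $n=|\pi_1(M)|$; by Lagrange $g^n\in N$ for every $g\in G$, so in particular $\mu^n,\lambda^n\in N$. As a finite-index subgroup of the bi-ordered group $G$, $N$ inherits a bi-ordering.

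The heart of the argument is to contradict $\lambda^n\in N$, given that $\lambda$ has infinite order in $G$ and lies in $[G,G]$. Writing $\lambda^n=\prod_i h_i\gamma^{\epsilon_i}h_i^{-1}$ with $\epsilon_i=\pm 1$ and abelianizing yields $0=p\sum_i\epsilon_i$, hence $\sum_i\epsilon_i=0$ because $p\ne 0$. Fix a bi-order on $G$ with $\gamma>1$ (replace $\gamma$ by $\gamma^{-1}$ otherwise). Since conjugation preserves the positive cone in a bi-ordered group, the product above contains equally many positive factors $h_i\gamma h_i^{-1}>1$ and negative factors $h_j\gamma^{-1}h_j^{-1}<1$. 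The plan is then to pair these factors, reorganize the resulting word using the commutativity $[\mu,\lambda]=1$, and apply Lemmas~\ref{unique roots} and~\ref{power commutes} inside the bi-ordered subgroup $[G,G]$ to collapse the expression down to $\lambda^n=1$, contradicting torsion-freeness of $G$.

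The main obstacle I expect is controlling the commutators that appear when swapping adjacent positive and negative factors across the word: the product $h_i\gamma h_i^{-1}\cdot h_j\gamma^{-1}h_j^{-1}$ is not $1$ in general, and the residual commutators must be shown to be dominated in the bi-order so that the cancellation of $\gamma$-exponents really carries through. To make this precise I would leverage that the bi-order restricted to the abelian peripheral $\Z^2=\langle\mu,\lambda\rangle$ is a linear order along some direction vector in $\R^2$, giving explicit arithmetic control on how $\mu^a\lambda^b$ sits relative to $\gamma^{\pm 1}$, and use this to perform the collapse by induction on the length of the word. This bookkeeping — ensuring that every correction term one accrues when rearranging the product is strictly controlled by the bi-order on the peripheral $\Z^2$ and cannot reintroduce a nonzero power of $\lambda$ — is where I anticipate the technical heart of the proof will lie.
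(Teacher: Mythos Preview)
The paper does not actually prove this theorem; it only states it and cites \cite{CR}. So the comparison is against the argument in \cite{CR}, not anything in the present text.

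Your observation about the unknot is correct and worth flagging: as literally stated the result needs $K$ nontrivial.

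However, the proposed argument has a genuine gap that cannot be closed with the tools you describe. After you reach $\sum_i\epsilon_i=0$, the plan to ``pair positive and negative conjugates and collapse'' has no mechanism for succeeding in a general bi-ordered group. Indeed, already in the free group $F_2=\langle a,b\rangle$ (bi-orderable), the commutator $[a,b]=(b a b^{-1})\cdot a^{-1}$ is a product of conjugates of $a^{\pm 1}$ with exponent sum zero, yet is nontrivial; nothing about bi-invariance of the order forces such balanced products to be $1$, and Lemmas~\ref{unique roots} and~\ref{power commutes} give no leverage here. More to the point, your scheme uses only that $G$ is bi-ordered, that $\gamma$ has nonzero abelianization, and that $\lambda\in[G,G]\setminus\{1\}$. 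But with only those hypotheses it is simply false that $\lambda^n\notin\langle\langle\gamma\rangle\rangle$: take $G=F_2$, $\gamma=a$, $\lambda=[a,b]$; then $\langle\langle a\rangle\rangle=\ker(F_2\to\Z,\ b\mapsto 1)$ contains all of $[G,G]$, so $\lambda\in\langle\langle\gamma\rangle\rangle$. Thus any successful argument must use something specific to knot complements beyond bi-orderability and the peripheral $\Z^2$; your proposed ``bookkeeping on the peripheral torus'' cannot supply it.

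The proof in \cite{CR} takes an entirely different route. A manifold with finite fundamental group is an $L$-space, so $K$ would be an $L$-space knot; by Ni such a knot is fibred, and by Ozsv\'ath--Szab\'o its Alexander polynomial has coefficients $\pm 1$ alternating in sign, hence no positive real root. On the other hand, the main contribution of \cite{CR} is that if a fibred knot has bi-orderable group, then the monodromy acting on $H_1$ of the fibre has a positive real eigenvalue, i.e.\ $\Delta_K(t)$ \emph{does} have a positive real root. These two conclusions contradict each other. So the actual proof passes through fibredness, Alexander polynomials, and Heegaard--Floer input, not through a direct manipulation of words in the normal closure of the surgery slope.
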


In fact, it is shown in \cite{CR} more generally that surgery on such a knot cannot produce what Ozsv{\'a}th and Szab{\'o} \cite{OS} call an L - space, that is, a rational homology sphere whose Heegaard-Floer homology has the smallest possible rank.

It has recently been conjectured by Boyer, Gordon and Watson \cite{BGW11} that a rational homology 3-sphere is an L-space if and only if its fundamental group is {\em not} left-orderable.  They present considerable evidence for this conjecture, indeed it has been verified for manifolds modeled on  seven of the eight Thurston 3-manifold geometries, leaving the hyperbolic case open, in general, at the time of this writing.  This is a fascinating connection between the very powerful new 3-manifold invariant of Heegaard-Floer homology and the orderability of groups.

\subsection{Foliations} 

Let us now consider (codimension one) foliations of 3-manifolds $M$.  By this we mean a collection 
$\F$ of subsets of $M$ for which appropriate $\R^3$ charts at points of $M$ meet members of $\F$ in parallel planes in $\R^3$.  Members of $\F$  are called leaves: they may be closed surfaces, or they may be noncompact and wrap around and meet the chart  infinitely many times.    It is known that every closed 3-manifold admits such foliations.  $\F$ is said to be transversely oriented if there is a continuous assignment of normal vectors to all the leaves.  If each member of $\F$ is considered a point, with the natural decomposition space topology, one gets the ``space of leaves'' which may be a non-Hausdorff space.   

If $\tilde{M} \to M$ is a covering space, then a foliation $\F$ of $M$ naturally lifts to a foliation 
$\tilde{\F}$ of $\tilde{M}$.  An $\R$-covered foliation $\F$ of $M$ is one which, when lifted to the universal cover of $M$ becomes a foliation whose space of leaves is homeomorphic with the real line $\R$.  

\begin{theorem}  If the 3-manifold $M$ has a transversely-oriented $\R$-covered foliation, then 
$\pi_1(M)$ is left-orderable.
\end{theorem}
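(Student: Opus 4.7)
The plan is to construct an action of $\pi_1(M)$ on the real line by orientation-preserving homeomorphisms and then appeal to Proposition \ref{action}. First, I would lift the foliation $\F$ through the universal cover $p : \tilde M \to M$ to obtain a foliation $\tilde \F$ of $\tilde M$, which inherits a transverse orientation from $\F$. By the $\R$-covered hypothesis, the leaf space $\Lambda$ of $\tilde \F$ is homeomorphic to $\R$, and the transverse orientation singles out an orientation on $\Lambda$.

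Next, the deck transformation group of $p$, naturally identified with $\pi_1(M)$, acts on $\tilde M$ permuting the leaves of $\tilde \F$ and preserving the transverse orientation---both the foliation and its transverse orientation are pulled back from downstairs, so they are automatically invariant under deck transformations. Passing to the leaf space yields a homomorphism
$$\phi : \pi_1(M) \to \Homeo_+(\Lambda) \cong \Homeo_+(\R).$$
When $\phi$ is injective, the paper's lemma that $\Homeo_+(\R)$ is left-orderable, together with the fact that left-orderability passes to subgroups, delivers the conclusion at once.

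The main obstacle I anticipate is that $\phi$ need not be faithful: a nontrivial deck transformation can preserve every leaf of $\tilde \F$, as already happens in the standard foliation of $T^3$ by tori, where the kernel on the leaf space is $\Z^2$. To handle this, set $K = \ker \phi$ and consider the short exact sequence
$$1 \to K \to \pi_1(M) \to \pi_1(M)/K \to 1.$$
The quotient embeds in $\Homeo_+(\R)$ and hence is left-orderable, so by Proposition \ref{exact} it suffices to left-order $K$. Each $\gamma \in K$ preserves every leaf of $\tilde \F$ while acting freely on $\tilde M$, and therefore acts freely on each individual leaf; restricting to a chosen leaf $\tilde L_0$ gives an injection of $K$ into $\Homeo(\tilde L_0)$.

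In the situation typical of $\R$-covered foliations of aspherical 3-manifolds, the leaf $\tilde L_0$ is a plane and $\tilde L_0/K$ is a surface, so that $K$ is isomorphic to a surface subgroup. By Theorem \ref{surface}, such a surface group is left-orderable (the $\RP^2$ case is excluded because $K$ is torsion-free as a subgroup of $\pi_1(M)$, and the Klein bottle case is still left-orderable). Combining the left-orderability of $K$ and of $\pi_1(M)/K$ through Proposition \ref{exact} then completes the proof. The delicate point throughout is the kernel analysis---packaging the leafwise action well enough to identify $K$ with a geometrically meaningful group is where the real work lies, while the leaf-space action itself is essentially formal.
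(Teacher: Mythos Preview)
Your core approach is exactly the paper's: lift the foliation to the universal cover, observe that the deck group $\pi_1(M)$ permutes the leaves of $\tilde{\F}$ in an orientation-preserving way (because the transverse orientation lifts equivariantly), pass to the induced action on the leaf space $\cong \R$, and invoke Proposition~\ref{action}. The paper's argument stops right there---it simply says ``then apply Proposition~\ref{action}'' and does not discuss effectiveness.

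You go further than the paper by worrying about the kernel $K$ of the leaf-space action, and you are right that this is an issue: your $T^3$ example shows the action need not be faithful, so a literal appeal to Proposition~\ref{action} (which requires an \emph{effective} action) is, strictly speaking, a gap in the paper's expository sketch. Your remedy via the short exact sequence and Proposition~\ref{exact} is the natural one. However, your identification of $K$ with a surface group rests on the claim that leaves of $\tilde{\F}$ are planes, which you justify only by calling it ``typical'' for aspherical $M$. A complete argument here would need to invoke the relevant foliation theory (for instance, that $\R$-covered rules out Reeb components, after which Novikov/Palmeira-type results force leaves in $\tilde M$ to be planes or spheres). Without that input the kernel step remains a sketch---as you yourself acknowledge in your final sentence. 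So your write-up is more scrupulous than the paper's in diagnosing the difficulty, but does not fully close it.
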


This can be seen by noting that $\pi_1(M)$ acts by deck transformations on the universal cover 
$\tilde{M}$ and therefore permutes the leaves of $\tilde{\F}$.  In other words, it acts on the space of leaves, assumed homeomorphic to $\R$, and by orientation-preserving homeomorphisms because of the transverse orientation which also lifts in an equivariant way.  Then apply Proposition \ref{action}.

It follows, for example, that Weeks' manifold does not support a transversely-oriented $\R$-covered foliation.   An important class of foliations are the so-called {\em taut} foliations which means that for each leaf there is a simple closed curve in the manifold intersecting that leaf and everywhere transverse to the foliation.  The first examples of hyperbolic manifolds without taut foliations were given by Roberts, Shareshian and Stein \cite{RSS03} showing that their groups cannot act on the space of leaves, which in their case may be a possibly non-Hausdorff one-dimensional manifold.   The interested reader can pursue the fascinating interplay of foliations and orderability (including circular orders) in \cite{CD03} and \cite{Cal07}.

\subsection{Mappings of nonzero degree}

If $M$ and $N$ are connected oriented manifolds of the same dimension $n$ and $f: M \to N$ is a mapping, the {\em degree} of $f$ is determined by the homology mapping 
$f_* : H_n(M; \Z) \to  H_n(N; \Z)$.  In particular, each of those top-dimensional homology groups is canonically isomorphic to $\Z$, coming from specified orientations.  If $c \in H_n(M; \Z)$ is the preferred generator, then $f_*(c) \in H_n(N; \Z) \cong \Z$ is the degree of $f.$   Degree is a measure of the algebraic number of preimages of a generic point.  A constant map, or more generally one with a contractible image, of course has degree zero.

It is often of interest to ask whether mappings of nonzero degree exist between given manifolds.  If the target is the sphere of appropriate dimension, such maps always exist.  Indeed, given any manifold $M$ of dimension $n$, consider a smooth closed $n$-ball $B \subset M$, say a closed neighbourhood of a point.  If we smash the boundary of $B$, as well as everything outside of $B$ in $M$, to a single point, the resulting space is topologically an $n$-sphere.   Moreover the quotient mapping $M \to S^n$ has degree one.  In fact, by composing by degree $d$ maps $S^n \to S^n$, there are maps of any given degree when the target is a sphere.  

A connected sum $M_1 \sharp M_2$ always maps with degree 1 on each of its factors, simply by smashing the other factor to a point, so results assuming irreducibility often generalize. 
However, in general, maps of nonzero degree might not exist.  Orderability gives one obstruction to their existence.

\begin{theorem}\label{degree}
Suppose $M$ is a closed oriented irreducible 3-manifold whose fundamental group is not left-orderable and that $N$ is a closed oriented 3-manifold whose group {\em is} left-orderable.
Then maps $M \to N$ of nonzero degree do not exist.
\end{theorem}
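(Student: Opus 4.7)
The plan is to assume a nonzero-degree map $f : M \to N$ exists and derive a contradiction by producing a surjection from $\pi_1(M)$ onto a nontrivial left-orderable group, which via Theorem~\ref{fund} (applicable since $M$ is closed, orientable, and irreducible) forces $\pi_1(M)$ itself to be left-orderable.

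First I would establish the classical fact that $H := f_*(\pi_1(M))$ has finite index in $\pi_1(N)$. Let $p : \tilde{N} \to N$ denote the covering corresponding to $H$; by the lifting criterion, $f$ lifts to $\tilde{f} : M \to \tilde{N}$ with $p \circ \tilde{f} = f$. If $H$ had infinite index, then $\tilde{N}$ would be a connected, non-compact $3$-manifold without boundary, whence $H_3(\tilde{N};\Z) = 0$. This would force $f_*[M] = p_*\tilde{f}_*[M] = 0$, contradicting $\deg(f) \ne 0$.

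Next, since $\pi_1(N)$ is left-orderable it is torsion-free (by the first proposition of Section~2), so it is either trivial or infinite. The trivial case corresponds to $N \cong S^3$, which must be tacitly excluded from the hypothesis since every closed oriented $3$-manifold admits a degree-one collapse map to $S^3$. Assuming $\pi_1(N)$ is nontrivial and hence infinite, its finite-index subgroup $H$ is also infinite, in particular nontrivial, and inherits left-orderability from $\pi_1(N)$. The map $f_* : \pi_1(M) \twoheadrightarrow H$ is then a surjection onto a nontrivial left-orderable group, so Theorem~\ref{fund} yields that $\pi_1(M)$ is left-orderable, contradicting the hypothesis.

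The main hurdle is the finite-index claim in the first step, which rests on the vanishing of top-dimensional integral homology of non-compact manifolds. Once that is in hand, the rest is essentially bookkeeping combining torsion-freeness of left-ordered groups, inheritance of left-orderability by subgroups, and Theorem~\ref{fund}.
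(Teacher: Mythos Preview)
Your proof is correct and uses the same ingredients as the paper's---Theorem~\ref{fund}, lifting to a cover, and the vanishing of $H_3$ for noncompact $3$-manifolds. The paper organizes the argument slightly more directly by first observing that $f_*$ on $\pi_1$ must be trivial (else Theorem~\ref{fund} immediately yields the contradiction) and then lifting all the way to the universal cover, thereby bypassing the finite-index statement; you also correctly flag the $N\cong S^3$ edge case that the paper's argument tacitly assumes away.
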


Consider $f: M \to N$.  Then our assumptions ensure that the induced map 
$\pi_1(M) \to \pi_1(N)$ must be trivial, because otherwise Theorem \ref{fund} would imply that $\pi_1(M)$ is left-orderable, contradicting the hypothesis. Since the induced map on fundamental groups is trivial, standard covering space theory implies $f$ lifts to the universal cover $\tilde{N}$ which is noncompact.  Then we have a factorization
$H_3(M) \to H_3(\tilde{N}) \to H_3(N)$ of the homology map induced by $f$ in which the middle group is trivial, because $\tilde{N}$ is noncompact.  It follows that $deg(f) = 0.$ \qed

\begin{example}
Theorem \ref{degree} implies, for example, that there is no nonzero degree mapping from the Weeks manifold to the homology sphere $\Sigma(2, 3, 7)$. 
\end{example}

\subsection{Conjectures of Waldhausen and Thurston}

Group orderability is connected with certain deep conjectures about 3-manifolds due to Waldhausen and W. Thurston.  A {\em Haken} 3-manifold $M$ is one which contains an {\em incompressible} surface $F$, meaning a surface of genus $\ge 1$ in $M$ for which the inclusion induces an {\em injective} homomorphism $\pi_1(F) \to \pi_1(M)$.  Many questions regarding 3-manifold groups had been proved for Haken manifolds, often by inductive arguments involving cutting $M$ open along $F$ producing a ``simpler'' Haken manifold.  Not all 3-manifolds are Haken, but Waldhausen famously asked whether 3-manifolds are {\em virtually} Haken, meaning some finite-sheeted covering is a Haken manifold -- a question which remained open for decades.

Even more audaciously, Thurston proposed a stronger conjecture for the most important, and difficult, class of 3-manifolds -- hyperbolic ones.  He conjectured that they are virtually fibred. 
A 3-manifold $M$ is said to be {\em fibred} if there is a locally trivial fibre bundle map $M \to S^1$ with fibre a compact orientable surface.    This is a very strong type of foliation in which the leaves are surfaces, all topologically equivalent, and the space of leaves is topologically a circle.  

There is an exact sequence associated with fibrations, which in the case of a fibred 3-manifold $M$ with fibre $F$ reduces to
$$1 \to \pi_1(F) \to \pi_1(M) \to \pi_1(S^1) \to 1 .$$
So we see that fibred 3-manifolds are Haken.
Of course $\pi_1(S^1)$ is infinite cyclic and we have seen that $\pi_1(F)$ is also bi-orderable (Theorem \ref{surface}).  From Proposition \ref{exact} it follows that $\pi_1(M)$ is left-orderable if $M$ is fibred.  

Therefore there was a (faint) hope of finding a counterexample to the virtual fibred conjecture by finding a Kleinian group which is not virtually left-orderable, meaning no finite index subgroup is left-orderable.  That hope was recently dashed by stunning work of Agol \cite{Agol}, building on results of Haglund, Wise and others, in which he proved both the virtual Haken conjecture and the virtual fibering conjecture.  Moreover, he showed that if $M$ is hyperbolic, then $\pi_1(M)$ contains a finite-index subgroup which is a {\em right-angled Artin group}, also known as RAAG.  A RAAG is defined as having a finite set of generators and only relations saying that some of the generators commute with each other -- a kind of blend of free group and free abelian group.  Since it is known that every RAAG is bi-orderable, we conclude the following.

\begin{theorem} Every Kleinian group is virtually bi-orderable.   That is, if $M$ is a closed hyperbolic 3-manifold, some finite index subgroup of $\pi_1(M)$ is bi-orderable.
\end{theorem}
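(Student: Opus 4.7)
The plan is to combine two deep inputs with elementary group theory: Agol's virtual specialness theorem, which supplies a finite-index subgroup of $\pi_1(M)$ that is (a subgroup of) a right-angled Artin group, together with the bi-orderability of every RAAG, the latter following from an argument modelled on the free-group case already discussed just before Proposition \ref{freeproduct}.

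First I would invoke Agol's theorem directly: since $M$ is a closed hyperbolic $3$-manifold, there exists a finite-index subgroup $H \le \pi_1(M)$ that embeds in a RAAG $A$. (Agol's theorem, building on Wise's work on special cube complexes and the Haglund--Wise canonical completion, gives virtual specialness of $\pi_1(M)$, which in turn gives such an embedding.) Since bi-orderability is inherited by subgroups, it then suffices to bi-order $A$.

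Next I would bi-order an arbitrary RAAG $A$ by mimicking the Magnus-style construction given for free groups in the excerpt. The key input is the classical theorem of Duchamp--Krob that $A$ is residually torsion-free nilpotent: the lower central series $\G_1(A) \supset \G_2(A) \supset \cdots$ has trivial intersection $\cap_{i \ge 1} \G_i(A) = \{1\}$, and each quotient $\G_i(A)/\G_{i+1}(A)$ is finitely generated and free abelian, hence bi-orderable. One then fixes an arbitrary bi-order on each such quotient and declares $g \in A \setminus \{1\}$ positive when, for the unique $i$ with $g \in \G_i(A) \setminus \G_{i+1}(A)$, the image of $g$ in $\G_i(A)/\G_{i+1}(A)$ lies in the chosen positive cone. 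A routine commutator computation shows that this positive cone is closed under multiplication and under conjugation by arbitrary elements of $A$ (the latter because $\G_i(A)/\G_{i+1}(A)$ is central in $A/\G_{i+1}(A)$), yielding a bi-order on $A$.

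The real obstacle here is entirely the first step: Agol's theorem is enormously deep, and I would cite it as a black box. The second step is essentially a rerun of the free-group argument already given in the excerpt, with the residual torsion-free nilpotence of RAAGs playing the role that is automatic in the free case; I expect no genuine difficulty there, and the passage from $A$ down to $H$ and then to $\pi_1(M)$ is immediate.
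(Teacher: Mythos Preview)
Your proposal is correct and follows essentially the same route as the paper: cite Agol's theorem to obtain a finite-index subgroup inside a RAAG, then use bi-orderability of RAAGs and the fact that bi-orderability passes to subgroups. The paper simply asserts that RAAGs are bi-orderable without proof, whereas you supply the Duchamp--Krob residual torsion-free nilpotence argument; you also state Agol's conclusion more accurately (a finite-index subgroup \emph{embeds in} a RAAG rather than \emph{is} one), but either formulation yields the result.
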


\section{Spaces of homeomorphisms} 

In this final section I would like to touch on some known results as well as research currently under way by myself and Danny Calegari on spaces of homeomorphisms.
Suppose $X$ is a topological space with closed subset $Y.$  We denote by $\Homeo (X, Y)$ the group of homeomorphisms $X \to X$ which are pointwise fixed on $Y$, the group operation being composition of functions.  $\Homeo (X, Y)$ can also be endowed with a topology, which we will ignore here, but rather concentrate on algebraic and orderability properties.  If $X$ is a simplicial complex or piecewise-linear manifold and $Y$ a PL closed subset, we consider the subgroup 
$\PL(X, Y)$ of homeomorphisms which are linear on each simplex of some finite subdivision of 
$X$.

\begin{proposition}
$\Homeo (I, \partial I)$ is left-orderable.
\end{proposition}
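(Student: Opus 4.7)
The plan is to mimic (or directly invoke) the argument already given for $\Homeo_+(\R)$. The cleanest approach is to exhibit $\Homeo(I,\partial I)$ as a subgroup of $\Homeo_+(\R)$: extend any $f\in\Homeo(I,\partial I)$ to a homeomorphism $\tilde f\in\Homeo_+(\R)$ by setting $\tilde f=\Id$ on $\R\setminus I$. Since $f$ fixes $\partial I$, this extension is continuous, strictly increasing, and a homeomorphism. The map $f\mapsto \tilde f$ is a group homomorphism (composition in $\Homeo(I,\partial I)$ matches composition of the extensions) and it is clearly injective. Because left-orderability passes to subgroups, the preceding lemma that $\Homeo_+(\R)$ is left-orderable immediately yields the result.

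Alternatively, one can run the same dynamic-realization argument directly in $I$. First I would fix an enumeration $x_1,x_2,\ldots$ of a countable dense subset of the open interval $(0,1)$. Given $f,g\in\Homeo(I,\partial I)$ with $f\neq g$, I would observe that any homeomorphism of $I$ fixing $\partial I$ is strictly increasing, so $f-g$ is continuous and nonzero somewhere in $(0,1)$; by density of $\{x_i\}$ there is some least index $i$ with $f(x_i)\neq g(x_i)$. Declare $f\prec g$ iff at that first index one has $f(x_i)<g(x_i)$. Trichotomy and transitivity are then routine.

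For left-invariance, suppose $f\prec g$ with first differing index $i$, and let $h\in\Homeo(I,\partial I)$. Since $h$ is a bijection, $hf(x_j)=hg(x_j)$ iff $f(x_j)=g(x_j)$, so the first index at which $hf$ and $hg$ disagree is again $i$. Since $h$ is strictly increasing, $f(x_i)<g(x_i)$ implies $hf(x_i)<hg(x_i)$, so $hf\prec hg$.

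The only nontrivial input is the observation that elements of $\Homeo(I,\partial I)$ are automatically strictly monotone (indeed strictly increasing, since they fix the endpoints in their natural order), which is used both to guarantee that $f\neq g$ forces disagreement on a dense set and to ensure left-invariance. Since everything else reduces to checking order axioms on the enumerated sequence, there is no serious obstacle; the embedding-into-$\Homeo_+(\R)$ route I would prefer for brevity.
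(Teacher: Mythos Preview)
Your proof is correct and follows essentially the same approach as the paper: both reduce the statement to the already-established left-orderability of $\Homeo_+(\R)$. The only cosmetic difference is that the paper observes $\Homeo(I,\partial I)$ is actually \emph{isomorphic} to $\Homeo_+(\R)$ (via conjugation by any homeomorphism $(0,1)\to\R$), whereas you embed it as a proper subgroup by extending by the identity; either suffices, and your direct reconstruction of the order on $I$ is just a replay of the lemma's proof.
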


This is because $\Homeo (I, \partial I)$ is clearly isomorphic with $\Homeo_+(\R)$ which we already have seen to be left-orderable and indeed universal for countable left-orderable groups.  
The following was observed by Chehata \cite{Che52}.

\begin{proposition}[Chehata]
$\PL(I, \partial I)$ is bi-orderable.
\end{proposition}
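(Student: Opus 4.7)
The plan is to exhibit an explicit bi-invariant positive cone on $G := \PL(I,\partial I)$ by looking at each element's behavior near the leftmost point at which it leaves the identity. For $f \in G$, the fixed set $\fix(f) = \{x \in I : f(x) = x\}$ is a finite union of points and closed intervals --- this is exactly where the PL hypothesis buys finiteness --- so its complement in $I$ is a finite union of open intervals, empty precisely when $f = \Id$. When $f \neq \Id$, let $(a,b)$ be the leftmost such interval; the continuous function $f(x) - x$ is nonvanishing on $(a,b)$, hence of constant sign. Declare $f$ to be positive when $f(x) > x$ on $(a,b)$, and let $P$ be the set of positive elements.

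I would then verify three properties of $P$. Trichotomy is immediate, since $f$ and $f^{-1}$ share the same fixed set and hence the same leftmost non-fixed interval, and on it $f(x)-x$ and $f^{-1}(x)-x$ have opposite signs. For closure under multiplication, given $f, g \in P$ with leftmost non-fixed intervals $(a_f, b_f)$ and $(a_g, b_g)$, set $a = \min(a_f, a_g)$; on $[0,a]$ both $f$ and $g$ are the identity, so $fg$ is too, and for $x$ just to the right of $a$ I run a short case analysis on whether $a_f < a_g$, $a_g < a_f$, or $a_f = a_g$, unwinding $fg(x) = f(g(x))$ to show $fg(x) > x$ in each case.

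For conjugation invariance, $\fix(hfh^{-1}) = h(\fix(f))$ for any $h \in G$, and since $h$ is an order-preserving homeomorphism fixing $0$, it carries the leftmost non-fixed interval $(a,b)$ of $f$ to the leftmost non-fixed interval $(h(a), h(b))$ of $hfh^{-1}$; on that interval one computes
\[
hfh^{-1}(y) \;=\; h(f(h^{-1}(y))) \;>\; h(h^{-1}(y)) \;=\; y,
\]
so $hfh^{-1} \in P$. Closure under conjugation is precisely what upgrades the left-order $f < g \iff f^{-1}g \in P$ to a bi-order, since right-invariance $f < g \Rightarrow fh < gh$ translates to $h^{-1}(f^{-1}g)h \in P$.

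The only step that needs any real care is the multiplication property in the coincident case $a_f = a_g = a$: one must check that $g(x)$ lies in the leftmost non-fixed interval of $f$ for $x$ in a sufficiently small right neighborhood of $a$, so that $f(g(x)) > g(x) > x$. This follows from continuity of $g$ at $a$ together with $g(a) = a$, but it is the only point in the argument that is not purely formal.
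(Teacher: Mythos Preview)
Your proof is correct and takes essentially the same approach as the paper: the paper defines the positive cone as those PL homeomorphisms whose graph first departs from the diagonal by veering above it, which is exactly your condition that $f(x)>x$ on the leftmost non-fixed interval. The paper leaves the verification of the three cone axioms to the reader, whereas you have supplied them in full; your case analysis for closure under multiplication and the conjugation-invariance computation are both correct.
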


It should be emphasized that each element of $\PL(I, \partial I)$ is a function which has only finitely many breaks where it may change slope.  We can define the positive cone to be the collection of all PL homeomorphisms whose graph $\{(t, f(t))\}$ in $I \times I$ has first departure from the diagonal veering above (rather than below) the diagonal.  \qed

Let us consider the 2-dimensional analogue.  The following is classical.

\begin{theorem}[Kerekjarto, Brouwer, Eilenberg]
$\Homeo (I^2, \partial I^2)$ is torsion-free. 
\end{theorem}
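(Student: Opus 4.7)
The plan is to promote an arbitrary $f \in \Homeo(I^2, \partial I^2)$ to a periodic orientation-preserving self-homeomorphism of $S^2$ and then invoke Kerekjarto's classical classification of such maps to reach a contradiction.

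First, I would realize $I^2$ as a tamely embedded closed disk in $S^2$ and let $D$ denote the closure of the complementary region, so that $S^2 = I^2 \cup D$ with $I^2 \cap D = \partial I^2 = \partial D$. Given $f \in \Homeo(I^2, \partial I^2)$, define $\tilde f : S^2 \to S^2$ by $\tilde f|_{I^2} = f$ and $\tilde f|_D = \Id$. Since $f$ pointwise fixes $\partial I^2$, the two pieces agree on the common boundary, so $\tilde f$ is continuous by the pasting lemma; the same construction applied to $f^{-1}$ produces a continuous two-sided inverse, and hence $\tilde f \in \Homeo(S^2)$. Because $\tilde f$ restricts to the identity on the nonempty open set $\mathrm{int}(D)$, it induces the identity on local homology at every point of that set, so it is orientation-preserving (on the connected manifold $S^2$, orientation behavior is locally constant and hence global).

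Next, suppose for contradiction that $f^n = \Id$ for some positive integer $n$ while $f \ne \Id$. Then $\tilde f^n = \Id$ and $\tilde f \ne \Id$, so $\tilde f$ is a nontrivial orientation-preserving self-homeomorphism of $S^2$ of finite order. By the Kerekjarto--Brouwer--Eilenberg theorem, every such map is topologically conjugate to a rigid Euclidean rotation of $S^2$. A nontrivial rotation has exactly two fixed points, yet $\tilde f$ fixes every point of the disk $D$, an uncountable set whose image under any homeomorphism is also uncountable. This contradicts the conjugacy with a rotation, so we must have $f = \Id$, proving that $\Homeo(I^2, \partial I^2)$ has no elements of finite order other than the identity.

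The substantive input, and the step I expect to be the main obstacle, is the appeal to Kerekjarto's classification of periodic self-homeomorphisms of the sphere; this is the place where genuinely two-dimensional rigidity enters the argument. An alternative route would extend $f$ by the identity to a compactly supported periodic orientation-preserving homeomorphism of $\R^2$ and then combine a Smith-theoretic analysis of the $\Z/n$-action --- forcing the fixed set of any prime-power cyclic subgroup to be mod-$p$ acyclic --- with the fact that the fixed set already contains a disk $D$ of non-fixed-set dimension, to derive a contradiction. Either path depends on the same underlying low-dimensional phenomenon, namely that an effective finite cyclic orientation-preserving action on a disk must act as rotations with a single interior fixed point, which is irreconcilable with fixing the entire boundary.
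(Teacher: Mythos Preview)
The paper does not actually prove this theorem; it merely records it as a classical result and moves on. Your argument is correct and is in fact the standard derivation: the theorem attributed to Ker\'ekj\'art\'o, Brouwer, and Eilenberg \emph{is} precisely the statement that a periodic homeomorphism of $S^2$ (or of the disk $D^2$) is topologically conjugate to an orthogonal map, and the torsion-freeness of $\Homeo(I^2,\partial I^2)$ follows exactly as you describe. One minor simplification: you need not pass to $S^2$, since the same classification applies directly to periodic homeomorphisms of the disk --- a nontrivial orientation-preserving one is conjugate to a rotation, hence permutes the boundary nontrivially and cannot fix $\partial I^2$ pointwise. Your Smith-theory alternative is also viable but is heavier machinery for the same conclusion; the direct appeal to the Ker\'ekj\'art\'o classification is the natural route and is presumably why the paper attaches those three names to the statement.
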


I believe it is an open question whether $\Homeo (I^2, \partial I^2)$ is left-orderable.

\begin{theorem}[Calegari-Rolfsen]\label{pl2}
$\PL(I^2, \partial I^2)$ is locally-indicable, and therefore left-orderable.
\end{theorem}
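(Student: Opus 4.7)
The plan is to establish local indicability of $\PL(I^2, \partial I^2)$, from which left-orderability follows via the Burns-Hale criterion (Theorem~\ref{BH}). It therefore suffices to show that every nontrivial finitely generated subgroup $H = \langle h_1, \ldots, h_k \rangle$ of $\PL(I^2, \partial I^2)$ admits a surjection onto $\Z$. The strategy is a two-dimensional analogue of the standard proof in dimension one (where one reads off the derivative at an endpoint of the common fixed set): identify a one-dimensional frontier of the common fixed set of $H$ and extract a derivative homomorphism into the affine group.

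I would first consider the common fixed set $F := \bigcap_i \fix(h_i) = \fix(H)$, which is a proper closed PL subset of $I^2$ containing $\partial I^2$, and analyze its frontier $\partial F$ in $I^2$. Since $F$ is a nonempty proper PL subset, $\partial F$ is a nonempty PL 1-complex (coming either from a 1-dimensional boundary piece of a 2-dimensional component of $F$, or from $\partial I^2$ itself when $F$ has no interior 2-dimensional collar). Then I would choose a point $p$ lying in the relative interior of a 1-face $\ell$ of $\partial F$, with $p$ generic so that no vertex of any $h_i$'s PL subdivision lies at $p$ and no singular 1-face of any $h_i$ passes through $p$ transverse to $\ell$. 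A short local verification shows this genericity is achievable: because $\fix(h_i) \supseteq F$ contains the ``inside'' of $\ell$ locally, the only singular 1-face of $h_i$ that can pass through $p$ at all is $\ell$ itself.

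With coordinates chosen so that $p = 0$, $\ell$ is the $x$-axis, and one side $S$ of $\ell$ lies outside $F$ locally, each generator $h_i$ is either the identity on a full 2-dimensional neighborhood of $p$, or is affine on $S$ near $p$ while fixing $\ell$ pointwise. Orientation-preservation then forces its Jacobian on $S$ to take the form
$$\rho(h_i) = \begin{pmatrix} 1 & a_i \\ 0 & b_i \end{pmatrix}, \quad b_i > 0.$$
The key technical point is to show that this assignment extends to a group homomorphism $\rho : H \to G$, where $G \cong \textnormal{Aff}^+(\R)$ is the subgroup of $\GL_2(\R)$ consisting of such matrices. The verification reduces to the observation that each $\rho(h_i)$ maps a small one-sided wedge at $p$ onto another such wedge, so that any composition of generators remains piecewise affine on a small enough one-sided neighborhood and the chain rule produces the matrix product. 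Moreover, $\rho$ is nontrivial: since $p \in \partial F$, the standard fact that the frontier of an intersection lies in the union of the frontiers gives some $i$ with $p \in \partial \fix(h_i)$, and for this $i$ the germ of $h_i$ at $p$ from the side $S$ is nontrivial, so $\rho(h_i) \neq I$.

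Finally, I would extract a surjection onto $\Z$ using the metabelian structure of $G$: the slope map $G \to \R^\ast_+$ sending $h \mapsto b_h$ is a homomorphism with additive kernel $\R$. Either its composition with $\rho$ is nontrivial on $H$, in which case the image is a nontrivial finitely generated subgroup of the torsion-free group $\R^\ast_+ \cong \R$, hence free abelian of positive rank and projecting onto $\Z$; or every $b_h$ equals $1$, in which case $\rho$ itself lands in the kernel and becomes the nontrivial additive homomorphism $h \mapsto a_h$, again landing in a finitely generated subgroup of $\R$ that surjects onto $\Z$. The most delicate step I anticipate is verifying that $\rho$ is a well-defined homomorphism on all words in the generators rather than just on the $h_i$ themselves, which requires tracking how the image $h_i(S)$ interacts with the PL singularities of subsequent generators and choosing the germ-level neighborhood carefully. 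A secondary subtle point is the genericity argument for $p$, especially in the degenerate case where $F$ is locally 1-dimensional at $p$ (so both sides of $\ell$ lie outside $F$), where one must select the side $S$ on which at least one generator has nontrivial germ.
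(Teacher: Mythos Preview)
Your proposal is correct and follows essentially the same approach as the paper: pick a point $p$ on the frontier of the common fixed set $\fix(H)$, choose local coordinates so that the generators act linearly on one side of a line through $p$, and map $H$ to the germ group, which is the affine group of matrices $\left(\begin{smallmatrix}1 & s\\ 0 & r\end{smallmatrix}\right)$. The paper packages your final step (slope map, then additive map) as a separate lemma stating that this affine group is a semidirect product of $(\R,+)$ and $(\R_+,\cdot)$ and hence locally indicable; your explicit two-case extraction of a surjection onto $\Z$ is exactly the content of that lemma, and the concerns you flag about genericity of $p$ and well-definedness of $\rho$ are precisely the points the paper glosses over with the phrase ``we can arrange that $p$ has a neighbourhood\ldots''.
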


Here is an outline of the proof.  Consider a nontrivial subgroup $H$ of $\PL(I^2, \partial I^2)$ generated by the finite set $h_1, \dots, h_k$ of functions.  The fixed point set $fix(h_i)$ of each generator is a finite polyhedron in $I^2$ containing  $\partial I^2$, and the same may be said of the global fixed point set  $fix(H) = \cap_{i=1}^k fix(h_i)$.  Now we choose a point $p$ which is on the frontier of $fix(H)$; we can arrange that $p$ has a neighbourhood which can be identified with 
$\R^2 = \{(x, y)\}$ in such a way that all the functions are the identity on the $x$-axis and act linearly on the upper half plane.   We then map $H$ nontrivially to the ``germs'' of functions of $H$ at $p$ which according to the following lemma is a locally indicable group.  It follows that $H$ is indicable.
\qed

\begin{lemma}
The group of functions $\R^2_+ \to \R^2_+$ which are linear, and equal to the identity on the boundary, is isomorphic with the semidirect product of two locally indicable groups, and is therefore locally indicable.
\end{lemma}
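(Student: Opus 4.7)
My plan is first to identify the group explicitly. Any linear map $\R^2\to\R^2$ restricting to the identity on the $x$-axis must have the form $(x,y)\mapsto(x+by,\,ay)$ for constants $a,b$, and preserving the half-plane $\R^2_+=\{y\ge 0\}$ forces $a>0$. So the group $G$ is parametrized by pairs $(a,b)$ with $a>0$ and $b\in\R$, with composition law $(a_1,b_1)\cdot(a_2,b_2)=(a_1a_2,\, b_1a_2+b_2)$---the classical affine ``$ax+b$'' group. The projection $(a,b)\mapsto a$ onto the multiplicative group $\R_{>0}$ has kernel $N=\{(1,b):b\in\R\}\cong(\R,+)$ and is split by the section $a\mapsto(a,0)$, so $G$ is the semidirect product $N\rtimes\R_{>0}$. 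Both factors are isomorphic to $(\R,+)$ (for the multiplicative factor, via the logarithm), and since every torsion-free abelian group is bi-orderable, both factors are locally indicable.

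It remains to show that local indicability passes through this semidirect product. Given a nontrivial finitely generated subgroup $H\le G$, I would examine its image under the quotient $G\twoheadrightarrow\R_{>0}$. If this image is nontrivial then, after taking logarithms, it is a nontrivial finitely generated subgroup of $(\R,+)$, hence free abelian of positive rank, and any coordinate projection onto $\Z$ composed with $H\twoheadrightarrow\R_{>0}$ yields a surjection $H\twoheadrightarrow\Z$. Otherwise $H$ sits inside the kernel $N\cong\R$, and the same argument applied directly to $H$ produces the required surjection. Either way $H$ is indicable, so $G$ is locally indicable.

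The main conceptual point, and the step I would expect to require the most care, is precisely this promotion of local indicability across the extension: Proposition \ref{exact} delivers only left-orderability, so one cannot merely quote it. Fortunately the concrete structure of $G$ makes the argument elementary, reducing to the fact that every nontrivial finitely generated subgroup of $(\R,+)$ is free abelian of positive rank and therefore admits a surjective homomorphism onto $\Z$.
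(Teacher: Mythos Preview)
Your proof is correct and follows essentially the same route as the paper: identify the group with the upper-triangular matrices $\left(\begin{smallmatrix}1 & s\\ 0 & r\end{smallmatrix}\right)$, $r>0$, and recognize this as the semidirect product of the additive group $\R$ by the multiplicative group $\R_{>0}$. The paper stops there, implicitly invoking the (standard) fact that an extension of a locally indicable group by a locally indicable group is locally indicable; your additional paragraph supplying that argument directly for this case is a welcome elaboration rather than a different approach.
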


Indeed such a function corresponds to a matrix
$\left( \begin{array}{cc}
1 & s \\
0 & r  \\
\end{array} \right)$ with $s \in \R$ and $r \in \R_+$.  Thus we have an isomorphism with the semidirect product of $\R$ as an additive group and $\R_+$ as a multiplicative group. \qed

\medskip
Theorem \ref{pl2} has been generalized to higher dimensions and more general manifolds in forthcoming work with Calegari.

\bibliographystyle{plain}

\bibliography{OrdGpsAndLDTopology}

\end{document}